\newtheorem{theorem}{Theorem}
\theoremstyle{plain}
\newtheorem*{acknowledgement*}{Acknowledgement}
\newtheorem{claim}{Claim}
\newtheorem{conjecture}{Conjecture}
\newtheorem{corollary}{Corollary}
\newtheorem{example}{Example}
\newtheorem{lemma}{Lemma}
\newtheorem{proposition}{Proposition}
\newtheorem{remark}{Remark}
\numberwithin{equation}{section}
\newcommand\Bin{\operatorname{Bin}}
\newcommand\ff{\bar{f}}
\newcommand\dist{\operatorname{dist}}
\newcommand\Qc{Q_\mathsf c}
\newcommand\Qi{Q_\mathsf i}
\newcommand\Qo{Q_\mathsf o}
\newcommand\pin{p} 
\newcommand\E{\mathbb{E}}
\newcommand\eqd{\overset{\mathrm{d}}{=}}
\newcommand\Po{\mathrm{Po}}
\begin{document}
\title[]{A modified bootstrap percolation on \\ a random graph coupled with a lattice}

\author{Svante Janson}
\address[Svante Janson]{Department of Mathematics, Uppsala University, \newline
\indent PO Box 480, SE-751 06 Uppsala, Sweden}
\email[]{svante.janson@math.uu.se}
\author{Robert Kozma}
\address[Robert Kozma]{Department of Mathematics, The University of Memphis, \newline%
\indent Memphis, TN, 38152, USA}%
\email[]{rkozma@memphis.edu}
\address[RK]{Department of Computer Sciences, University of Massachusetts Amherst, \newline%
\indent Amherst, MA 01003, USA}%
\email[]{rkozma@cs.umass.edu}
\author{Mikl\'{o}s Ruszink\'{o}}
\address[Mikl\'{o}s Ruszink\'{o}]{Alf\'ed R\'enyi Institute of Mathematics, \newline
\indent Hungarian Academy of Sciences, \newline
\indent 13-15 Re\'altanoda utca, Budapest,  Hungary, 1053}
\email[]{ruszinko.miklos@renyi.mta.hu}
\author{Yury Sokolov}
\address[Yury Sokolov]{Department of Mathematics, The University of Memphis,\newline  
\indent Memphis, TN, 38152, USA \newline
\indent New address: Department of Medicine, UC San Diego, \newline
\indent 9300 Campus Point Drive, La Jolla CA 92037, USA 
}
\email[]{ysokolov@ucsd.edu}

\thanks{This work is supported in part by NSF grant DMS-13-11165.
The work of S.J. is supported in part by the Knut and Alice Wallenberg
Foundation. The work of M.R. is supported in part by National Research, Development and Innovation Office grants 104343,  116769. Partial support by DARPA Superior Artificial Intelligence Grant is appreciated.}


\begin{abstract} In this paper a random graph model $G_{\mathbb{Z}^2_N,p_d}$ is introduced, which is a combination of fixed torus grid edges in $(\mathbb{Z}/N \mathbb{Z})^2$ and some additional random ones. The random edges are called long, and the probability of having a long edge between vertices  $u,v\in(\mathbb{Z}/N \mathbb{Z})^2$ with graph distance $d$ on the torus grid is $p_d=c/Nd$, where $c$ is some constant. We show that, {\em whp}, the diameter $D(G_{\mathbb{Z}^2_N,p_d})=\Theta (\log N)$.
Moreover, we consider a modified 
{non-monotonous} bootstrap percolation on $G_{\mathbb{Z}^2_N,p_d}$.
We prove the presence of phase transitions in mean-field approximation and provide fairly sharp bounds on the error of the critical parameters.
\end{abstract}

\maketitle


\section{introduction}
{Bootstrap percolation is a cellular automaton, which has been introduced
by Chalupa, Leath, and Reich \cite{Chalupa} as a process on the Bethe lattice where every vertex can be in active or inactive state. Initially, a vertex is active with some probability independently of the state of other vertices. The process is defined so that an active vertex stays active forever, while the state of an inactive vertex at each step is determined following an
update rule based on the states of its neighbors.
It is said that the process percolates if all the vertices eventually become active. }

{Bootstrap percolation on lattices has been extensively investigated in the last decades.
It has been shown under a broad range of conditions that there is a critical initialization probability such that above this probability there is percolation, while there is no percolation below this critical probability. The corresponding effect is called phase transition, which occurs at the critical probability.
The main goal is to derive conditions for the critical probability as the function of the properties of the lattice and the update rule.
}
{For bootstrap percolation on the two-dimensional square {\em infinite} 
  lattice with 2-neighbor update rule, i.e., a site becomes active if at
  least 2 of its neighbors are active, the first result is due to van Enter
  \cite{vanEnter} who proved that the critical probability is zero. This
  result was generalized to all dimensions by Schonmann \cite{Schonmann}. It
  was shown that for the $r$-neighbor rule in $d$ dimensions the critical
  probability is 0 if $r \le d$ and 1 otherwise.} 
{
The finite volume (metastabilty) behaviour was investigated by  Aizenman and Lebowitz
\cite{Aizenman} and the threshold function for the critical probabilty $p_c([n]^d,r)$ for the finite  $d$-dimensional lattice with $r$-neighbor rule has been identified up to a constant factor by Cerf and Manzo \cite{CM} for all $d\ge r$.
For $d=r=2$  the sharp threshold
\begin{equation} \label{Hol}
p_c([n]^2,2)=\frac{{\pi}^2}{18 \log n}+o\left(\frac{1}{\log n}\right)
\end{equation}
has been proved by Holroyd \cite{Holroyd}.
Surpisingly, this result contradicted to numerical predictions, which were
apparently due to slow convergence. Finally, Balogh, Bollob\'{a}s,
Duminil-Copin, and Morris \cite{Balogh12} derived sharp threshold for 
$p_c([n]^d,r)$ for all $d\ge r$.

{It is of interest to analyze a modified model by relaxing
  the original condition requiring that an active vertex stays active
  forever. This leads to a broader class of modified non-monotonous
  bootstrap percolation when most of techniques used in (monotonous)
  bootstrap percolation cannot be applied. There are some results for models
  with non-monotonous bootstrap percolation. Coker and Gunderson \cite{Coker} 
  considered bootstrap percolation with a modified
  $k$-threshold rule. In their case, an inactive vertex becomes active if it
  has at least $k$ active neighbors, while an active vertex with no active
  neighbors becomes inactive. The last condition allowed to generalize
  techniques previously used for bootstrap percolation and find sharp
  thresholds for the critical probability of initial activations so that all
  vertices eventually become active.} 

{Recently, bootstrap percolation has been considered on the Erd\H os-R\'enyi random graph  $G_{n,p}$ in \cite{Janson12}, where a theory has been developed regarding the size $a$ of the set of initially active sites.
Results include sharp threshold for phase transition for parameters $p$ and $a$, and the time $t$ required to the termination of the bootstrap percolation process.}
{Turova and Vallier \cite{Turova15} considered bootstrap
  percolation on the combination of the random graph $G_{n,p}$ and  the
  $n$-cycle, where random edges are added between any pair of vertices of
  the $n$-cycle with probability $p$ independently of each other. Starting
  with $a$ active vertices, they analyzed when the percolation process
  terminates. Sharp thresholds for phase transition for parameters $p$ and
  $a$ were derived. In particular, it was shown that 
for a range of the parameters,
the process percolates
  on the combined graph but not on the random graph $G_{n,p}$ without local
  edges.}  In \cite{Lengler15} the authors considered bootstrap percolation  process on $G_{n,p}$ with vertices of two different types.

{There has been extensive work on studying random graphs of large order, which have relatively small diameter. For example, Bollob\'{a}s and Chung \cite{Bollobas88} showed that adding a random matching to
the  \(n\)-cycle reduces its \(\lfloor n/2 \rfloor \) diameter to \((1 + o(1)) \log_{2} n\).}
The so called  \(n\)-cycle long-range percolation graph has been considered in \cite{Benjamini2001} where the probabilities of added random edges decay polynomially with the distance between the corresponding pairs of vertices.
It was shown that the diameter of this graph is of the order of
\(\log n\), assuming that the parameters are constrained to a certain parameter region. The combination of a finite \(d\)-dimensional grid $[n]^d$ with  random edges  (decreasing in distance) added has been considered by Coppersmith, Gamarnik and Sviridenko \cite{coppersmith}. They showed that under certain conditions on the dimension and probability \(p\), the diameter is either $\Theta (\log n)$ or $n^\eta$, where the power coefficient \(\eta\) satisfies $0<\eta<1$.

{
Watts and Strogatz \cite{Watts98} introduced the {\em "small world"} network.
The edges of a so-called ring lattice with $k$ edges per vertex are rewired with probability $p$.
This construction leads to the drastic reduction of the network diameter, and it allows to 'tune' the graph between regularity ($p=0$) and disorder ($p=1$). A different version of the {\em "small world"} model has been described by Newman and Watts \cite{Watts99}.
Here, an \(n\)-cycle is considered and the edges of the cycle are fixed. In contrast to the original formulation, however, in \cite{Watts99} random edges are added with some probability instead of rewiring the edges of the cycle, which significantly reduces the graph diameter, too.
Since then there has been a high interest in the small world phenomenon in mathematical and other communities \cite{albert}.
Scaling behavior and phase transitions in inhomogeneous random graphs have been also investigated, see, for example \cite{Bollobas07}.}


In this paper we consider a stochastic process of activation propagation over the random graph which combines lattice $\mathbb{Z}^2$ with additional random edges that depend on the distance between vertices. A similar graph has been studied, by,  e.g., Aizenman,  Kesten and Newman \cite{aizenman88}, the so-called long-range percolation graph. In that model a pair of sites of the $d$-dimensional lattice $\mathbb{Z}^{d}$ is connected (or a bond is occupied) with probability that depends on the graph distance. In the present work, we change the way probabilities are defined over the long-range percolation graph, to get a sparser graph with respect to long edges.
We consider a random graph $G$ that is built as follows. We start with the $\mathbb{Z}^2$ lattice
over a $(N+1)\times(N+1)$ grid, and we assume periodic boundary conditions.
Thus, we have a torus $\mathbb{T}^2 = (\mathbb{Z}/N\mathbb{Z})^2$, with the short notation $\mathbb{Z}^2_N$.
The set
of vertices of $G$ consists of all vertices of $\mathbb{Z}^2_N$, in total
$N^2$ vertices. All the edges from the torus grid $\mathbb{Z}^2_N$ are included in
the graph $G$. In addition, we introduce random edges as follows. For every
pair of vertices
we assign an edge with probability that depends on the graph distance $d$
between the two vertices, i.e., $d$ is the length of the shortest path
between the given pair of vertices in the torus grid. Accordingly, the
probability of a long edge is described as follows:
\begin{equation}
\mathbb{P} \left( (u,v)\in E(G) \right)=
p_d=
\frac{c}{N}\times
{d^{-\alpha}} 
\qquad \text{when}\quad  \dist(u,v)=d,
\label{defprob}
\end{equation}
%
%
%
%
%
%
%
where $c$  and \(\alpha\) are positive constants, $d>1$ (no multiple edges are allowed between any pair
of vertices) and $N$ is large enough so that each $p_d<1$. We assume \(\alpha = 1\)
throughout this study. We will denote this model the $G_{\mathbb{Z}^2_N,p_d}$
graph. The edges of the torus are called {\em short edges}, while the
randomly added ones are called {\em long edges}.

The present work is organized as follows: first we describe some properties of the
introduced random graph $G_{\mathbb{Z}^2_N,p_d}$. We derive bounds on the diameter of this graph and describe its degree distribution using Poisson approximation.
The second part of this paper is devoted to the study of an activation
process using a modified non-monotonous bootstrap percolation model.
First, we consider the critical probability of the activation process and state a few conjectures, 
and then to simplify the mathematical treatment, we analyze the activation as a stochastic process in mean-field approximation \cite{bbk}. 
We derive conditions for phase transitions as a function of the model parameters, including  the proportion of long edges \(\lambda\) and the $k$-neighbor update rule parameter \(k\).

We will use the following standard notation; for non-negative sequences $a_m$ and $b_m$,  $a_m= O(b_m)$ if $a_m\le cb_m$ holds for some constant $c>0$ and every $m$; $a_m= \Theta (b_m)$ if both
$a_m= O(b_m)$ and $b_m= O(a_m)$ hold; $a_m\sim b_m$ if $\lim_{m\to \infty}{a_m}/{b_m}=1$; $a_m= o(b_m)$ if $\lim_{m\to \infty}{a_m}/{b_m}=0$. A sequence of events $A_n$ occurs with high probability, {\em whp}, if the probability $\mathbb{P}(A_n)=1-o(1)$.

\section{Properties of $G_{\mathbb{Z}^2_N,p_d}$}

First notice that the expected number of long edges  $E_{\ell}\subseteq E(G_{\mathbb{Z}^2_N,p_d})$ is proportional to
$N^2$.

\begin{claim}\label{edgeexp}

$\mathbb{E}(|E_{\ell}|)\sim (2c\ln 2) N^2$,\quad i.e.,\quad $\displaystyle{\lim_{N\to\infty}\frac{\mathbb{E}(|E_{\ell}|)}{2cN^2\ln 2}=1}$.

\end{claim}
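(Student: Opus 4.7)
The plan is to apply linearity of expectation and exploit the vertex-transitivity of $\mathbb{Z}^2_N$. Writing $n_d$ for the number of vertices at graph distance exactly $d$ from a fixed vertex of $\mathbb{Z}^2_N$, each potential long edge between $u\ne v$ with $\mathrm{dist}(u,v)=d\ge 2$ is present independently with probability $c/(Nd)$, so
\[
\mathbb{E}(|E_\ell|)\;=\;\frac12\sum_{\substack{u\ne v\\\mathrm{dist}(u,v)\ge 2}}\frac{c}{N\,\mathrm{dist}(u,v)}\;=\;\frac{N^2}{2}\sum_{d\ge 2}\frac{c\,n_d}{Nd}\;=\;\frac{cN}{2}\sum_{d\ge 2}\frac{n_d}{d}.
\]
The problem thus reduces to showing $S_N:=\sum_{d\ge 2}n_d/d\sim 4N\ln 2$.

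Next I will determine $n_d$ explicitly. The torus distance from $(0,0)$ to $(x,y)$ equals $f(x)+f(y)$ with $f(x):=\min(x,N-x)$, and a short count (taking $N$ even for definiteness) gives
\[
n_d=\begin{cases}4d, & 1\le d\le N/2-1,\\ 2N-2, & d=N/2,\\ 4(N-d), & N/2+1\le d\le N-1,\\ 1, & d=N;\end{cases}
\]
the odd-$N$ case is essentially identical and affects only lower-order terms.

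The sum $S_N$ then splits into a near-field and a far-field contribution. The near-field sum is immediate,
\[
\sum_{d=2}^{N/2-1}\frac{4d}{d}\;=\;2N+O(1).
\]
For the far-field sum, using $H_n:=\sum_{j=1}^n 1/j=\ln n+\gamma+O(1/n)$, one computes
\[
\sum_{d=N/2+1}^{N-1}\frac{4(N-d)}{d}\;=\;4N\bigl(H_{N-1}-H_{N/2}\bigr)-4\Bigl(\frac{N}{2}-1\Bigr)\;=\;4N\ln 2-2N+O(1).
\]
Combined with the $O(1)$ contributions from $d=N/2$ and $d=N$, this yields $S_N=4N\ln 2+O(1)$, and hence
\[
\mathbb{E}(|E_\ell|)\;=\;\frac{cN}{2}\,S_N\;=\;(2c\ln 2)\,N^2+O(N)\;\sim\;(2c\ln 2)\,N^2,
\]
as claimed.

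There is no deep obstacle: this is essentially a finite calculation. The only points requiring attention are keeping the boundary corrections at $d\approx N/2$ and $d=N$ (and the minor modifications for odd $N$) to subleading order, and recognizing the harmonic-sum asymptotic $H_{N-1}-H_{N/2}\to\ln 2$, which is also visible in the Riemann-sum form $(1/N)\sum_{d>N/2}4(N-d)/d\to\int_{1/2}^{1}4(1-r)/r\,dr=4\ln 2-2$; this, added to the near-field value $2$, produces the leading constant $4\ln 2$.
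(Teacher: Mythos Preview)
Your proof is correct and follows essentially the same approach as the paper: both compute $\mathbb{E}(|E_\ell|)=\tfrac{N^2}{2}\sum_d |\Lambda_d|\,p_d$ using the explicit count of vertices at distance $d$, split the sum at $d\approx N/2$, and reduce the far-field part to the asymptotic $H_{N-1}-H_{N/2}\to\ln 2$. Your version is slightly more detailed in handling the harmonic sums and boundary terms, but there is no substantive difference in method.
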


\begin{proof} Indeed, the number of vertices $|\Lambda_d|$ in $\mathbb{Z}^2_N$ which are exactly at distance $d$ from a fixed vertex is
\[
|\Lambda_d| =
\begin{cases}
4d, & \text{$ 1\le d \leq \left\lfloor N/2 \right\rfloor$} \\
4(N-d), & \text{$  \left\lfloor N/2 \right\rfloor<d\le N$}
\end{cases}
\]
for $N$ odd, and
\[
|\Lambda_d| =
\begin{cases}
4d, & \text{$1\le d < N/2 $} \\
4d - 2, & \text{$ d = N/2 $} \\
4(N-d), & \text{$ N/2 < d < N $} \\
1, & \text{$ d = N $}
\end{cases}
\]
for $N$ even. The number of pairs of vertices in $\mathbb{Z}^2_N$ having distance $d$ is $\frac{N^2 |\Lambda_d|}{2}$. Therefore, for $N$ odd
\begin{eqnarray}\label{c}
\mathbb{E}(|E_{\ell}|)&=&\sum_{d=2}^N \frac{N^2 |\Lambda_d|}{2}\frac{c}{Nd}=\sum_{d=2}^{N/2}\frac{4N^2d}{2}\frac{c}{Nd}+
\sum_{d=N/2+1}^{N}\frac{4N^2(N-d)}{2}\frac{c}{Nd}
\nonumber
\\
&=&(2c\ln 2)N^2+O(N)\sim (2c\ln 2) N^2.
\end{eqnarray}

For $N$ even a similar computation  gives the same result.
\end{proof}


\subsection{Degree distribution}
\label{subsec_degdistr}

The degree distribution of a vertex $v\in G_{\mathbb{Z}^2_N,p_d}$ with respect to long edges can be approximated by Poisson distribution. Let $W$ be the random variable describing the degree of a particular vertex $v$ considering long edges only. Then clearly, the degree of a vertex $v\in G_{\mathbb{Z}^2_N,p_d}$ considering the short edges, too, is $W+4$.













\begin{lemma}\label{lemmaPoisAppr}
The probability that a vertex has degree $k$ considering only the long edges is given by
\begin{equation}
\mathbb{P} \left( W = k \right)
= \sum_{k_2+ \ldots + k_N =k} \;\; {\prod_{i=2}^{N}{ \binom{|\Lambda_i|}{k_i} \left( \frac{c}{Ni} \right)^{k_i} \left( 1- \frac{c}{Ni} \right)^{|\Lambda_i| - k_i} }}.
\end{equation}
The total variation distance
\begin{equation}\label{degree} d_{TV} \left( \mathcal{L}(W), \Po (\lambda)\right)=\frac{1}{2}
 \sum_{j\ge 0} | \mathbb{P}(W=j)-\mathbb{P}(Y=j)|=O(1/N),
\end{equation}
where  the random variable $Y$ has Poisson distribution $\Po(\lambda)$, with $\lambda=4c\ln2$.
\end{lemma}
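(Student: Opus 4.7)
The plan is to view $W$ as a sum of independent Bernoulli indicators, one for each potential long edge incident to $v$, and then apply a standard LeCam-style Poisson approximation bound. For each $d$ with $2 \le d \le N$ there are $|\Lambda_d|$ candidate long edges from $v$, each present independently with probability $p_d = c/(Nd)$. Hence $W = \sum_{d=2}^{N} W_d$ where $W_d \sim \mathrm{Bin}(|\Lambda_d|, p_d)$ are independent, and the displayed formula for $\mathbb{P}(W=k)$ is just the convolution: enumerate $(k_2,\ldots,k_N)$ with $\sum k_i = k$ and multiply the binomial probabilities. This is immediate.

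For the total-variation bound I would use LeCam's inequality, which states that if $W = \sum_i X_i$ is a sum of independent Bernoullis with $\mathbb{P}(X_i=1) = q_i$ and $\tilde\lambda = \sum_i q_i$, then $d_{TV}(\mathcal{L}(W), \mathrm{Po}(\tilde\lambda)) \le \sum_i q_i^2$. Applied to our situation (with one Bernoulli per candidate long edge, $q_i = p_d$ for an edge at distance $d$) this yields
\begin{equation*}
d_{TV}(\mathcal{L}(W), \mathrm{Po}(\tilde\lambda)) \le \sum_{d=2}^{N} |\Lambda_d|\, p_d^{\,2} \le \sum_{d=2}^{N} 4d \cdot \frac{c^{2}}{N^{2} d^{2}} = \frac{4c^{2}}{N^{2}} \sum_{d=2}^{N} \frac{1}{d} = O\!\left(\frac{\log N}{N^{2}}\right),
\end{equation*}
using the bound $|\Lambda_d| \le 4d$ for all $d$. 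It remains to compare $\mathrm{Po}(\tilde\lambda)$ with $\mathrm{Po}(\lambda)$ for $\lambda = 4c\ln 2$. By the standard estimate $d_{TV}(\mathrm{Po}(\mu_1), \mathrm{Po}(\mu_2)) \le |\mu_1 - \mu_2|$, this reduces to showing $|\tilde\lambda - \lambda| = O(1/N)$, where $\tilde\lambda = \mathbb{E}W = \sum_d |\Lambda_d| p_d$.

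The computation of $\tilde\lambda$ parallels the one in Claim~\ref{edgeexp}: splitting the sum at $d = \lfloor N/2 \rfloor$ gives $\tilde\lambda = (4c/N)\lfloor N/2 \rfloor + 4c \sum_{d = \lfloor N/2 \rfloor+1}^{N} 1/d - (4c/N)(N - \lfloor N/2 \rfloor) + O(1/N)$, and the harmonic sum equals $H_N - H_{\lfloor N/2\rfloor} = \ln 2 + O(1/N)$, yielding $\tilde\lambda = 4c\ln 2 + O(1/N)$ (the even case is analogous with the two additional corrections at $d = N/2$ and $d = N$ contributing only $O(1/N)$). Combining these estimates with the triangle inequality for $d_{TV}$ gives
\begin{equation*}
d_{TV}(\mathcal{L}(W), \mathrm{Po}(\lambda)) \le O\!\left(\frac{\log N}{N^{2}}\right) + O\!\left(\frac{1}{N}\right) = O\!\left(\frac{1}{N}\right),
\end{equation*}
as claimed. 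No single step looks subtle; the only place requiring care is checking that the difference $|\tilde\lambda - \lambda|$ really is $O(1/N)$ rather than the $O(\log N / N)$ one might fear from a careless bound on the harmonic tail, which is why I want to track the cancellation between the two ``boundary'' contributions at $d \approx N/2$ explicitly.
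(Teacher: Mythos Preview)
Your proof is correct and follows essentially the same approach as the paper: write $W$ as a sum of independent Bernoulli indicators, apply Le\,Cam's bound to get $d_{TV}(\mathcal L(W),\mathrm{Po}(\tilde\lambda))\le\sum_d|\Lambda_d|p_d^2=O((\log N)/N^2)$, then use the triangle inequality together with $d_{TV}(\mathrm{Po}(\tilde\lambda),\mathrm{Po}(\lambda))=O(|\tilde\lambda-\lambda|)=O(1/N)$. The only cosmetic difference is that the paper reads off $\tilde\lambda=4c\ln 2+O(1/N)$ directly from Claim~\ref{edgeexp}, whereas you recompute the harmonic-sum estimate explicitly; both routes give the same $O(1/N)$ error.
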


\begin{proof}
The probability of the event $A_i$ that a vertex has $k_i$ edges of length $i$ is clearly
\begin{equation}
\mathbb{P} \left(A_i \right) = \binom{|\Lambda_i|}{k_i} \left( \frac{c}{Ni} \right)^{k_i} \left( 1- \frac{c}{Ni} \right)^{|\Lambda_i| - k_i}
\end{equation}
Therefore, the probability that a vertex has degree exactly $k$ is
\begin{multline}
\mathbb{P} \left( W = k \right)
= \mathbb{P} \left(\bigcup_{k_2+ \ldots + k_N =k} \;\; \bigcap_{i=2}^{N} A_i \right)
= \sum_{k_2+ \ldots + k_N =k} \;\; {\prod_{i=2}^{N}{\mathbb{P} \left(A_i \right)  }}
\\
= \sum_{k_2+ \ldots + k_N =k} \;\; {\prod_{i=2}^{N}{ \binom{|\Lambda_i|}{k_i} \left( \frac{c}{Ni} \right)^{k_i} \left( 1- \frac{c}{Ni} \right)^{|\Lambda_i| - k_i} }}.
\end{multline}

The last expression is not very convenient to use. However, a standard
Poisson approximation can be given using Le Cam's argument \cite{lecam60},
see also e.g.\ \cite{janson92}. Pick an arbitrary vertex $v$ and let enumerate
the other $N^2-5$ vertices by $u_{i}$, $i=1,\ldots,N^2-5$, excluding the
nearest neighbors, i.e., vertices at distance 1. The long edges that connect
the vertex $v$ to other vertices of the graph are independent 0--1 random
variables with Bernoulli Be$(p_i)$ distribution. In other words, let $I_i=1$
be the event that there is an edge between vertices $v$ and $u_i$, so that
$\mathbb{P} (I_i = 1) = p_i$ and $\mathbb{P} (I_i = 0) = 1 - p_i$, where
$p_i$ may in general vary for different $i$. Consider now the degree $W =
\sum_{i=1}^{N^2-5}{I_i}$ of the vertex $v$. Let
$$\lambda_1 = \sum_{i=1}^{N^2-5} {p_i}=4c\ln 2+O(1/N),$$
where the last equality follows from Eq.~(\ref{c}). By triangle inequality,
\begin{equation}
d_{TV} \left( \mathcal{L}(W), \text{Po} (\lambda)\right)
\le d_{TV} \left( \mathcal{L}(W), \text{Po} (\lambda_1)\right)+d_{TV} \left(\text{Po} (\lambda_1) , \text{Po} (\lambda)\right)
\end{equation}
The first term, by Le Cam \cite{lecam60},
see also \cite[Theorem 2.M]{janson92}, is at most
\begin{eqnarray}
\sum_{i=1}^{N^2-5} {p_i^2} &= &\sum_{d=2}^{N} {|\Lambda_d| p_d^2} \le \sum_{d=1}^{N} {|\Lambda_d| p_d^2}
= \sum_{d=1}^{N/2}{4d}\left(\frac{c}{Nd}\right)^2
\nonumber
\\
&+&
\sum_{d=N/2+1}^{N}{4(N-d)}\left(\frac{c}{Nd}\right)^2\le  \sum_{d=1}^{N}{4d}\left(\frac{c}{Nd}\right)^2=O\left(\frac{\ln N}{N^2}\right).
\end{eqnarray}
and by Theorem 1.C (i) in \cite{janson92}
\begin{equation}
d_{TV} \left(\text{Po} (\lambda_1) , \text{Po} (\lambda)\right)
=O\left(|\lambda_1-\lambda|\right)
=O\left(\frac{1}{N}\right).
\end{equation}
\end{proof}
Clearly, Lemma~\ref{lemmaPoisAppr} also implies that in Eq. (\ref{degree})
each term satisfies $|\mathbb{P}(W=j)-\mathbb{P}(Y=j)|=O(1/N)$.

\subsection{The diameter of $G_{\mathbb{Z}^2_N,p_d}$}

Next we show that the addition of long edges to the torus grid reduces significantly (from
linear to logarithmic in the number of vertices) its diameter.

\begin{theorem}
There exist constants $C_1, C_2$, which depend on $c$ only, such that for the diameter $D(G_{\mathbb{Z}^2_N,p_d})$ the following hold.
\[
\lim_{N \rightarrow \infty} \mathbb{P} \left( C_1 \log{N} \leq D(G_{\mathbb{Z}^2_N,p_d}) \leq C_2 \log{N}) \right) = 1,~ {\mbox i.e.,}~  D(G_{\mathbb{Z}^2_N,p_d})=\Theta (\log N),~whp.
\]
\end{theorem}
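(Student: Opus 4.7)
\emph{Proof strategy.} I prove the two bounds separately. The lower bound follows from bounding the expected volume of BFS balls using the degree estimate of Lemma~\ref{lemmaPoisAppr}; the upper bound uses a two-phase BFS in which a short initial phase over torus edges produces a polylogarithmic seed, after which long edges drive exponential expansion.

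\emph{Lower bound.} Let $\mu := 4c\ln 2 + 4$ be the asymptotic mean total degree of a vertex (Lemma~\ref{lemmaPoisAppr} for long edges plus four torus neighbours) and fix a vertex $v$. Exposing edges as one runs BFS from $v$, each newly added vertex contributes at most $\mu(1+o(1))$ new expected edges, so that $\mathbb{E}|B_k(v)| \leq (1+\mu)^{k}(1+o(1))$, where $B_k(v)$ denotes the ball of radius $k$ about $v$. Choosing $k = C_1\log N$ with $C_1 < 2/\log(1+\mu)$ gives $\mathbb{E}|B_k(v)| = o(N^2)$, so by Markov $|B_k(v)| < N^2/2$ whp, and hence some $u$ satisfies $d(u,v) > k$; thus $D(G) \geq C_1\log N$ whp.

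\emph{Upper bound.} Fix $v$ and carry out BFS in two phases. In \emph{Phase 1}, use only torus edges for $k_1 := \lceil\log N\rceil$ steps; deterministically $|B_{k_1}(v)| \geq k_1^2 \geq \log^2 N$, since a torus disk of radius $k$ contains at least $k^2$ vertices. In \emph{Phase 2}, I claim the following growth lemma: whenever $\log^2 N \leq |B_k(v)| = m \leq N^2/2$, one has $|B_{k+1}(v)| \geq (1+\eta)m$ with conditional probability $1 - \exp(-\Omega(m))$, for some absolute $\eta > 0$. Iterating for $k_2 = O(\log N)$ further steps reaches $|B_{k_1+k_2}(v)| \geq N^2/2$. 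The per-vertex failure probability over all Phase-2 steps is at most $O(\exp(-\Omega(\log^2 N))) = o(N^{-2})$, so a union bound over the $N^2$ choices of the starting vertex gives the exponential growth simultaneously for every $v$ whp. For any pair $u,v$, the balls $B_{k_1+k_2}(u)$ and $B_{k_1+k_2}(v)$ then intersect, yielding $d(u,v) \leq 2(k_1+k_2) = O(\log N)$.

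\emph{Main obstacle.} The delicate step is the Phase-2 growth lemma. The plan is: (i) reveal long edges by a vertex-exposure order during BFS, so that edges crossing the boundary of $B_k$ are still fresh when being counted and the new-vertex indicators become conditionally independent Bernoullis; (ii) bound from below the expected number of long edges from $B_k$ to $V\setminus B_k$ by $\Omega(m)$, i.e.\ show that the internal mass $\sum_{u,w\in B_k}p_{d(u,w)}$ stays bounded away from the total $\lambda m$, which holds once $B_k$ is spread over the torus (and a single Phase-2 step suffices to achieve this, since a typical long edge jumps a distance $\Theta(N)$); (iii) control collisions by the uniform identity $\sum_{v\in V}p_{d(v,w)} = \lambda + o(1)$, so that each external $w$ attracts only $O(1)$ expected edges from $B_k$, keeping the expected number of \emph{distinct} new vertices at $\Omega(m)$; (iv) conclude with a Chernoff/Azuma concentration for the vertex-exposure martingale. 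Step (ii) is the main source of technical difficulty, as $B_k$ is neither uniformly distributed nor geometrically simple.
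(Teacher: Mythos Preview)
Your lower bound is essentially the paper's. Your upper bound is a genuinely different route: the paper does \emph{not} run a BFS expansion argument. Instead, it partitions the torus into $k\times k$ blocks (with $k$ a constant depending on $c$), observes that the induced ``block graph'' stochastically dominates an Erd\H{o}s--R\'enyi graph $G_{n,c_1/n}$ on $n=N^2/k^2$ vertices, and then quotes the standard facts that such a graph has a giant component of logarithmic diameter. Routing within a block costs at most $2k$ torus steps, and every vertex reaches the giant component in $O(\sqrt{\log N})$ torus steps. This sidesteps exactly the geometric difficulty you flag in step~(ii).

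Your plan has a genuine gap in Phase~2, and it is not just the vagueness of ``spread over the torus''. The quantity in your step~(ii), $\sum_{u\in B_k,\,w\notin B_k}p_{d(u,w)}$, is the wrong one. Under vertex exposure, once you know $B_k$ you have already conditioned on there being \emph{no} edge from any $u\in B_{k-1}$ to any $w\notin B_k$; only edges from the current boundary $\partial B_k:=B_k\setminus B_{k-1}$ to $V\setminus B_k$ are still fresh. So the expected number of new long-edge endpoints is
\[
\sum_{u\in\partial B_k}\;\sum_{w\notin B_k}p_{d(u,w)}\;\ge\;|\partial B_k|\,(\lambda-2c),
\]
which is $\Omega(|\partial B_k|)$, not $\Omega(m)$. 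Your growth lemma therefore needs the additional input $|\partial B_k|\ge\gamma\,|B_k|$ for some fixed $\gamma>0$, and this is \emph{false} for arbitrary sets in the claimed range: if $B_k$ happened to be a torus disk of radius $r$ with $\log N\ll r\ll N$, then $|\partial B_k|/|B_k|\asymp 1/r\to 0$ and $|B_{k+1}|\le(1+O(1/r)+\lambda)|B_k|$, which for small $c$ (hence $\lambda<\eta$) violates $|B_{k+1}|\ge(1+\eta)|B_k|$. So the lemma cannot hold uniformly for all $m$ in $[\log^2N,N^2/2]$; it can only hold along the actual BFS trajectory, and for that you must inductively control the boundary-to-volume ratio.

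This is possible, but it is precisely where the interplay of short and long edges enters, and your sketch does not supply it. In particular, the sentence ``long edges drive exponential expansion'' is misleading when $c$ is small: then $\lambda=4c\ln2<1$, the long-edge branching is subcritical on its own, and one step of long edges from $\partial B_k$ produces only $\lambda|\partial B_k|$ new seeds in expectation. What rescues the process is that each such seed, over the next $\ell$ torus steps, grows a disk of $\Theta(\ell^2)$ fresh vertices, each spawning further long edges; choosing $\ell\asymp 1/\sqrt{\lambda}$ makes the $\ell$-step block supercritical. Carrying this out rigorously amounts to a multitype/age-structured branching analysis (or, equivalently, to proving a growth lemma for $B_{k+\ell}$ versus $B_k$), which your proposal does not provide. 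The paper's block decomposition is exactly a clean packaging of this idea: the $k\times k$ block plays the role of ``$\ell$ torus steps of local growth'', and the dominated Erd\H{o}s--R\'enyi graph replaces the branching analysis.
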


\begin{proof}
The lower bound is trivial. The expected degree $\mathbb{E}(d(v))$ of a
vertex $v$ by Claim~\ref{edgeexp} is a constant $k=k(c)$. Thus,
the expected number of vertices $A_m$ we can reach in at most $m\ge 0$
steps from a given vertex $v$ is less than or equal to
$1+\sum_{i=1}^m k(k-1)^{i-1}$. For $m\ge3$, this is less than $k^m$, and
thus, by
Markov's inequality,
\begin{equation}\label{dlower}
\mathbb{P} (A_m \ge N^2) \leq \frac{\mathbb{E} (A_m)}{N^2} \leq \frac{k^m}{N^2}.
\end{equation}
If we choose $m\le C_1\log N$ with $C_1$ sufficiently small, the probability in Eq.~(\ref{dlower}) tends to zero, i.e., 
{we cannot reach all vertices of the graph from a given vertex $v$ by a path with at most $C_1\log N$ edges}. Hence,  $C_1\log N$ bounds the diameter from below.

To prove the upper bound, partition the vertices of $G_{\mathbb{Z}^2_N,p_d}$
into consecutive  $k \times k$ blocks $B_{ij}, i,j=1,\ldots,
\frac{N}{k}$, where $k$ is a constant $k(c)$ to be chosen later. (For
simplicity, we will assume that everywhere divisibility holds during the
proof; otherwise we let some blocks be $(k+1)\times(k+1)$.)
Define the graph $G^\prime$ as follows. The vertices are the
blocks, and two blocks $B_{i,j}$ and $B_{k,\ell}$, $(1,\le i,j,k,\ell\le
N/k)$ are connected iff there is a long edge from a vertex of $B_{i,j}$ to a
vertex of $B_{k,\ell}$ in $G_{\mathbb{Z}^2_N,p_d}$. We obtain a random graph
on $N^2/k^2$ vertices where the edge probabilities can be obtained from the
ones of $G_{\mathbb{Z}^2_N,p_d}$. For an arbitrary pair of vertices $B_{i,j}$
and $B_{k,\ell}$, the probability of the event $A_{i,j;k,l}$ that they are
connected is bounded from below by the probability, that two blocks which
are most distant from each other in  $\mathbb{Z}^2_N$ are connected.
Therefore, for large $N$,
\begin{eqnarray*}\label{dupper}
\mathbb{P} (A_{i,j;k,l})&\ge & \mathbb{P} (A_{1,1;N/(2k),N/(2k)})= 1 -  \mathbb{P} (\overline{A_{1,1,\frac{N}{2k},\frac{N}{2k}}})
\ge  1-(1-p_N)^{k^4}\\
&=&1-\left(1-\frac{c}{N^2}\right)^{k^4}\ge 1-e^{-ck^4/N^2}\ge ck^4/2N^2.
\end{eqnarray*}
For the second inequality we picked the two most distant vertices from each
block, and the last one follows from $e^x\le 1+x/2$ for $x<0$ sufficiently
close to $0$.
Consequently, we can couple the random graph $G'$ with a random graph
$G''\subseteq G'$ where edges appear independently with probability
$ck^4/2N^2$, i.e., $G''$ is an
Erd\H os-R\'enyi random graph $G_{n,p}$ with $n=N^2/k^2$ and $p=ck^4/2N^2$.

By, e.g., Theorem 9.b in the seminal paper of Erd\H os and R\'enyi \cite{ER} there is a constant $c_1$ such that in the Erd\H os-R\'enyi random graph $G_{n,p}$ with $p=c_1/n$ there is a giant component on at least, say, $n/2$ vertices, {\em whp}. Choosing
$$k\ge (2c_1/c)^{1/2}$$
we get that the edge probability in $G''$ is
$$ck^4/2N^2\ge c_1k^2/N^2,$$
and thus $G''$ will contain  a giant component on at least $N^2/2k^2$
vertices, {\em whp}. The diameter of the giant component of $G_{n,p}$ with
$p=c_1/n$ is known to be of order $O(\log n)$, {\em whp}. (See, e.g. Table 1
in \cite{ChLu}.) 

First, assume that vertices $u,v\in G_{\mathbb{Z}^2_N,p_d}$ are contained in
blocks $B(u)$ and $B(v)$ which are vertices of the giant component in
$G''$. Find the shortest path, say, $B(u)=B(x_0), B(x_1), B(x_2), \ldots
,B(x_m)=B(v)$, between $B(u)$ and $B(v)$ in $G''\subseteq G'$. Let $(x_0,x_1)$,
$(x_1^\prime, x_2)$, $(x_2^\prime ,x_3)$, \ldots , $(x_{m-1}^\prime,x_m)$,
$x_i,x_i^\prime\in B(x_i)$ be the edges in $G_{\mathbb{Z}^2_N,p_d}$ inducing
this path in $G'$. 

Now go from $u$ to $x_0$ in $B(u)$ along  short  ($\mathbb{Z}^2$) edges. Jump from $x_0$ to $x_1$. Then go from $x_1$ to $x_1^\prime$ in $B(x_1)$ along  short edges. Jump from $x_1$ to $x_2^\prime$, and so on.
The total length of the path from $u$ to $v$, will be at most
$$m+2k(m+1)\le (2k+1)(m+1).$$
Indeed, we make $m$ jumps, and within each block we make at most $2k$ steps along short edges. Since $m=O(\log N)$, {\em whp}, the case when $u$ and $v$ are inside blocks that belong to the giant component in $G''$ is finished.

Next we show that, {\em whp,} every vertex $v\in G_{\mathbb{Z}^2_N,p_d}$ is close to some block $B$ of the giant component in $G''$.
Indeed, by symmetry, the set $A$ of vertices in the giant component of $G''$
can be any set of vertices of the same size, with the same
probability. Therefore, one can regard $A$ as a uniformly random subset on
at least half of the vertices in $G''$. 

For some large constant $D$, the number of vertices with distance at most $D\sqrt{\log_2 N}$ from a fixed vertex $v$ in $\mathbb{Z}^2$ is
$$\sum_{d=1}^{D\sqrt{\log_2 N}}4d\ge 4D^2 \log_2 N,$$
i.e., this neighborhood contains a vertex from at least
$$ \frac{4D^2 \log_2 N}{k^2}$$
blocks.
Since $A$ contains at least half of the vertices in $G''$, the probability that none of those blocks is in $A$ is
$$\le 2^{-\frac{4D^2 \log_2 N}{k^2}}=N^{-4D^2/k^2}.$$

Therefore, the probability that there is a vertex $v\in G_{\mathbb{Z}^2_N,p_d}$ for which there is no vertex
$u$ within distance $D\sqrt{\log_2 N}$ such that $B(u)\in A$ is
$$\le N^2\cdot N^{-4D^2/k^2}<N^{-2},$$
assuming that $D$ is large enough.

Now, consider  two arbitrary vertices $u,v\in G_{\mathbb{Z}^2_N,p_d}$. If one or neither of them is in a block from $A$, then, {\em whp,} each of them can reach a block from $A$ within  $D\sqrt{\log_2 N}$ steps in $\mathbb{Z}^2$, and then proceed as in case $B(u),B(v)\in A$. Since the number of additional steps {\em whp} is $O(\sqrt{\log N})$,
the proof is finished.
\end{proof}

\section{Activation process on the random graph $G_{\mathbb{Z}^2_N,p_d}$} \label{section3}

{Now we introduce a stochastic process on the graph we have just built. Each vertex is described by its {\em state}, which can be either active or inactive.
The {\em state} of the vertex changes during the process according to a rule specified next.
We define a potential function $\chi_v (t)$ for each vertex $v$ such that $\chi_v(t)=1$ if vertex $v$ is active at time $t$,
and  $\chi_v (t)=0$ if $v$ is inactive.
Let $A(t)$ denote the set of all active vertices at time $t$, thus
\(A(t) = \{v \in V(G_{\mathbb{Z}^2_N,p_d})  \bigm| \chi_v (t)=1 \}\).
}

{At the beginning,  let $A(0)$ be  a random subset of vertices with each vertex active with probability $p$, independently of all other vertices, and the corresponding distribution we denote by $\mathbb{P}_p$.
Each vertex may change its activity based on the states of its neighbors, according to the rule ${\mathcal R_k}$
\begin{equation}
\chi_v (t+1) = \mathbbm{1}  \left(
\sum_{u \in N(v)}{\chi_u(t)}  \geq k
\right),
\label{dynamics}
\end{equation}
where
$\mathbbm{1}$ is the indicator function and
$N(v)$  denotes the subsets of vertices in the closed neighborhood of the vertex $v$, i.e., the vertex $v$ and its neighbors. Here $k$ is a nonnegative integer that specifies the threshold required for the vertex to be in the active state at the next step.
}

{
According to Eq.~(\ref{dynamics}) we have a $k$-neighbor update rule, i.e., a vertex will be active at the next time step if it has at least $k$ active neighbors including itself.
Observe, that the set of active vertices does not necessarily grow monotonically during the activation process in the present modified bootstrap percolation model, in contrast to usual bootstrap percolation.}

The set $A$ is said to percolate with respect to rule ${\mathcal R_k}$,    if eventually all vertices in $G_{\mathbb{Z}^2_N,p_d}$
get activated and stay so. The critical probability  $p_c$ for $k\le 5$ is defined as
\begin{equation}\label{criticaldef}
p_c\left(G_{\mathbb{Z}^2_N,p_d}, {\mathcal R_k}\right)
=\inf\{p:~\mathbb{P}_p~(A(0)~ percolates )\ge 1/2\}.
\end{equation}

Notice, that for $k\ge 6$, {\em whp}, even $A(0)=  V(G_{\mathbb{Z}^2_N,p_d}) $ does not percolate. Indeed, {\em whp}, the number of 
vertices with degree determined by long edges equals to zero is $>e^{-\lambda}N^2/2$. That is, even if we initially activate all of the vertices, those with degrees determined by long edges equal to zero will deactivate in the first step and stay so forever.

\section{Percolation and  density}

The pretty straightforward analysis of the activation process in terms of percolation is given as follows.

\begin{proposition} \label{P1}
For $0\le k\le 2$ and  $\lambda\ge 0$, {\em whp}, $p_c=o(1)$; for $3\le k\le 5$ and $\lambda\ge 0$, {\em whp}, $p_c=1-o(1)$.
\end{proposition}

\begin{proof}
Cases \(k=0\) and \(k=1\) are trivial. Even if we start the activation with a single vertex, it will fully percolate. 

In the case $k=2$, it is enough to show that the statement holds with $\lambda =0$, since the vertices will get activated even easier after  adding long edges.  

First notice, that in this case only isolated active vertices can get inactive. Indeed, if an active vertex $v$ is connected to some other active one, by the activation rule it will stay so forever. 

Now, let $A^\prime (0)\subset A(0)$ be the subset of initially activated, non isolated vertices. If we start the process with $A^\prime (0)$, once a vertex get activated, it will stay so forever. Indeed, if a vertex is activated, it is added to an active component, and therefore, will not be isolated. Therefore, starting the process with $A^\prime (0)$ it will be monotone.

It is left to show, that there is a `sufficiently large' random subset $A^\prime (0)$. One can easily show this concentrating on matchings in the grid. Indeed,  activate the vertices in two rounds, each time with probability $p/2$. Thus each vertex gets active with probability at most $p$. Call vertex strongly active, if it was activated in the first round, and its left neighbour in the second round. Clearly, each vertex is strongly active independently with probability $p^2/4$. Using, e.g., theorem of Holroyd (\ref{Hol}, \cite{Holroyd}) cited in the introduction concludes the proof.

To prove the case $k=3$, first partition $V(G_{\mathbb{Z}^2_N,p_d})$ into
squares 
($C_4$-s) with respect to grid edges (ignoring leftovers if $N$ is odd).
Notice, that if no vertex of a $C_4$ is initially activated and neither of
them has long edges, then the vertices of the $C_4$ will never get
activated. The probability of this event for a given $C_4$ is $\sim
e^{-4\lambda}(1-p)^4$, assuming an initial activation probability $p$. 
If follows, e.g.\ using Chebyshev's inequality, 
that {\em whp}, at least $e^{-4\lambda}(1-p)^4N^2/5$  of the vertices will
never be active, i.e., a positive fraction. The cases $k=4$, $5$ clearly follow
from the case $k=3$. 
\end{proof}

\subsection{The case $k=3$}
The evolution of the density, i.e., 
the behaviour of the random variable
$\hat{\rho}_t=\hat\rho_t(N,\lambda,\pin) =  |A(t)|/N^2$ as a function of $t$,
is particularly interesting in the case $k=3$;
in this subsection we consider only this case.
In usual bootstrap percolation, where the process is monotone, 
for an arbitrary initial configuration $A(0)$ of active vertices a final configuration
$FC(A(0))$ is always reached. This 
is  not necessarily true in our case, where oscillations may occur for ever,
as shown by the following example.

\begin{example}\label{E1}\rm
  Let $N$ be divisible by 4. Suppose first that there are no long
  edges, so the graph is $\mathbb{Z}^2_N$, and suppose that the initial
  configuration $A(0)$
is a checkerboard pattern where a vertex $(i,j)$ is active if and only if
  $i+j$ is even. Then the set of active vertices will oscillate, with
  $A(2n)=A(0)$ and $A(2n+1)=\mathbb{Z}^2_N\setminus A(0)$ for all $n$.
In this example, $|A(t)|$ is constant, but we can modify it by
adding some long edges as follows: 

Assume that there is a long edge between
$(4i,4j)$ and $(4(i+1),4j)$  for all $i,j\in \mathbb{Z}$, but no
others. 
(All coordinates are mod $N$; recall that $N$ is divisible by $4$.) 
Then, with the same checkerboard initial
configuration, the vertices $(4i,4j)$ stay active forever, while the
others oscillate as before. Hence, $\hat\rho_t$ oscillates with
$\hat\rho_{2n}=1/2$ and $\hat\rho_{2n+1}=9/16$.
\end{example}

We believe that global oscillations as exemplified in Example \ref{E1} occur
with very small probability when $N$ is large. However, there will
\emph{whp} be local oscillations, as shown by the following example.

\begin{example}\label{E2}\rm
Consider the box $Q= [-2,6]\times[-2,6]$; we partition 
$Q=\Qc\cup \Qi\cup \Qo$ where $\Qc=[0,4]\times[0,4]$ (the \emph{core}), 
$\Qi=([-1,5]\times[-1,5])\setminus \Qc$ (the \emph{inner rim})
and $\Qo=Q\setminus \Qi$ (the \emph{outer rim}).
We say that $Q$ is \emph{special}
if 
there are long edges joining 
each of the four corners of the core, i.e., $(0,0)$, $(0,4)$, $(4,0)$,
$(4,4)$, to two vertices in the outer rim, but no other long edges with an
endpoint in $Q$.

Suppose that $Q$ is special, and that in the initial configuration $A(0)$, 
every vertex in the outer rim is active, but no vertex in the inner rim,
while a
vertex $(i,j)$ in the core is active if and only if  $i+j$ is even. 
(Cf.\ Example \ref{E1}.) Then, the vertices in the outer and inner rims of
$Q$ will stay frozen as they are for ever, and so will the four corners of
the core, while the other vertices in the core will oscillate as in Example
\ref{E1}. Hence the 
total number of active vertices in $Q$ will oscillate between 45 and 48
(note that $|\Qc|=25$ and $|\Qo|=32$).

Partition $V(G_{\mathbb{Z}^2_N,p_d})$ into $9\times9$ boxes $Q_k$
(ignoring possible leftovers). Each $Q_k$ is a translate of $Q$, and we say
that $Q_k$ is special if the long edges with an endpoint in $Q_k$  are such
that $Q_k$ is a translate of a special $Q$.
For a given $\lambda>0$, each box $Q_k$ is special with some probability
$p_{s,N}$ converging to some $p_s>0$ as $N\to\infty$. Hence the expected
number of special boxes is $\sim p_s N^2/81$, and it follows easily using
Chebyshev's inequality that \emph{whp} the number of special boxes is at
least $(p_s/100)N^2$.

If $Q_k$ is a special box, then the initial configuration for $Q$ discussed
above translates to an initial configuration of $Q_k$ such that the number
of active vertices in $Q$ oscillates.
For any fixed initial activation probability  $p$, this initial
configuration of $Q_k$ has a certain positive probability, and by
independence and the law of large numbers, \emph{whp} a positive fraction of
all special boxes will have this initial configuration, and will thus oscillate.

Consequently, \emph{whp} at least a fixed positive fraction of all vertices 
participate for ever in local oscillations.
\end{example}

However, while local oscillations as in Example \ref{E2} involve many
vertices, we believe that typically, there is no global syncronization and
that therefore the many local oscillations to a large extent cancel each
other, so that the oscillations in $\hat\rho_t$ typically are small.
To make this precise,
define the random variables
$\overline\rho(N,\lambda,p)=\limsup_{t\to\infty}\hat\rho_t$
and
$\underline\rho(N,\lambda,p)=\liminf_{t\to\infty}\hat\rho_t$
(depending on the graph $G_{\mathbb{Z}^2_N,p_d}$ and $A(0)$).

\begin{conjecture}\label{c1} 
For every fixed $\lambda \ge 0$ and  initial probability $0<\pin<1 $,
there is a non-random limiting density 
$\hat{\rho}_{\lim}(\lambda ,\pin)
$ such that both $\overline\rho(N,\lambda ,\pin)$ and 
$\underline\rho(N,\lambda ,\pin)$ converge in
probability to $\hat{\rho}_{\lim}(\lambda ,\pin)$ as $N\to\infty$, i.e.,
for every $\varepsilon >0$, 
\begin{equation*}
\mathbb{P}(|\overline{\rho}(N,\lambda,\pin)-\hat{\rho}_{\lim}(\lambda,\pin)|
  >\varepsilon )\to0,
\qquad
\mathbb{P}(|\underline{\rho}(N,\lambda,\pin)-\hat{\rho}_{\lim}(\lambda,\pin)|
  >\varepsilon )\to0.
\end{equation*}
\end{conjecture}

The argument in the proof of Proposition \ref{P1} shows that if
$\delta=e^{-4\lambda}(1-p)^4/5$, then
\emph{whp} $\hat\rho_t<1-\delta$ for all $t$, 
and the same argument shows that
\emph{whp} $\hat\rho_t>\delta$ for all $t$.
Hence, \emph{whp}
$\delta\le \overline\rho(N,\lambda,p)\le 1-\delta$ and
$\delta\le \underline\rho(N,\lambda,p)\le 1-\delta$,
and if Conjecture \ref{c1} is true, then 
$0<\hat{\rho}_{\lim}(\lambda ,\pin)<1$, for every $\lambda\ge0$ and $0<\pin<1$.

If we consider expectations instead of random variables, note first
that Fatou's lemma implies 
$\liminf_{t\to\infty}\E\hat\rho_t\ge\E\underline\rho(N,\lambda,\pin)$
and
$\limsup_{t\to\infty}\E\hat\rho_t\le\E\overline\rho(N,\lambda,\pin)$.
Moreover, if Conjecture \ref{c1} holds, then
dominated convergence implies 
$\E\overline\rho(N,\lambda,\pin)\to \hat{\rho}_{\lim}(\lambda ,\pin)$
and $\E\underline\rho(N,\lambda,\pin)\to \hat{\rho}_{\lim}(\lambda ,\pin)$
as $N\to\infty$,
and thus
$$\lim_{N\to\infty}\limsup_{t\to\infty}\E\hat\rho_t
=\hat{\rho}_{\lim}(\lambda ,\pin)$$ and similarly for $\liminf$.

In the special case $\lambda=0$, 
the graph $G_{\mathbb{Z}^2_N,p_d}=\mathbb{Z}^2_N$ is regular and each closed
neighborhood has 5 elements. It follows that for $k=3$, there is a symmetry
between active and inactive vertices in the update rule, and consequently,
if we replace any initial set $A(0)$ by its complement, each $A(t)$ is
replaced by its complement $\mathbb{Z}_N^2\setminus A(t)$. 
It follows that 
$$\rho_t(N,0,1-p)\eqd 1-\rho_t(N,0,p)$$
(where $\eqd$ means equality in distribution), and thus
$\overline\rho(N,0,1-p)\eqd 1-\underline\rho(N,0,p)$.
Consequently, if Conjecture \ref{c1} holds, then
$$\hat{\rho}_{\lim}(0 ,1-p)=1-\hat{\rho}_{\lim}(0 ,p),$$
and in particular, 
$$\hat{\rho}_{\lim}(0,0.5)=0.5.$$
We  believe that
the limiting density 
is less than the initial probability 
if $\pin<0.5$ and greater than the initial probability if $\pin>0.5$:
\begin{conjecture}\label{c2} 
For $0<\pin<0.5$,~ $\hat{\rho}_{\lim}(0,\pin)<\pin$,  and for $0.5<\pin<1$, ~ $\hat{\rho}_{\lim}(0,\pin)>\pin$. 
\end{conjecture}

We believe that, furthermore, similar `critical' initial probabilities, i.e.,
where the evolution of the density changes from decreasing to increasing,
do exist for all $\lambda$. 

\begin{conjecture}\label{c3} 
For arbitrary $\lambda \ge 0$  there is a $0<p^{(crit)}_{in}(\lambda )<1$,
such that 
$\displaystyle{\hat{\rho}_{\lim}(\lambda ,p^{(crit)}_{in}(\lambda)
  )=p^{(crit)}_{in}(\lambda)}$. 
Moreover, if $0<\pin<p^{(crit)}_{in}(\lambda )$, then
$\hat{\rho}_{\lim}(\lambda ,\pin)<\pin$, and  
if $p^{(crit)}_{in}(\lambda)<\pin<1$, then $\hat{\rho}_{\lim}(\lambda ,\pin)>\pin$. 
\end{conjecture}

Numerical results shown in Figure \ref{conjecturelambda}  support our conjectures.

\begin{figure}[h]
\centering
\includegraphics[scale=.47]{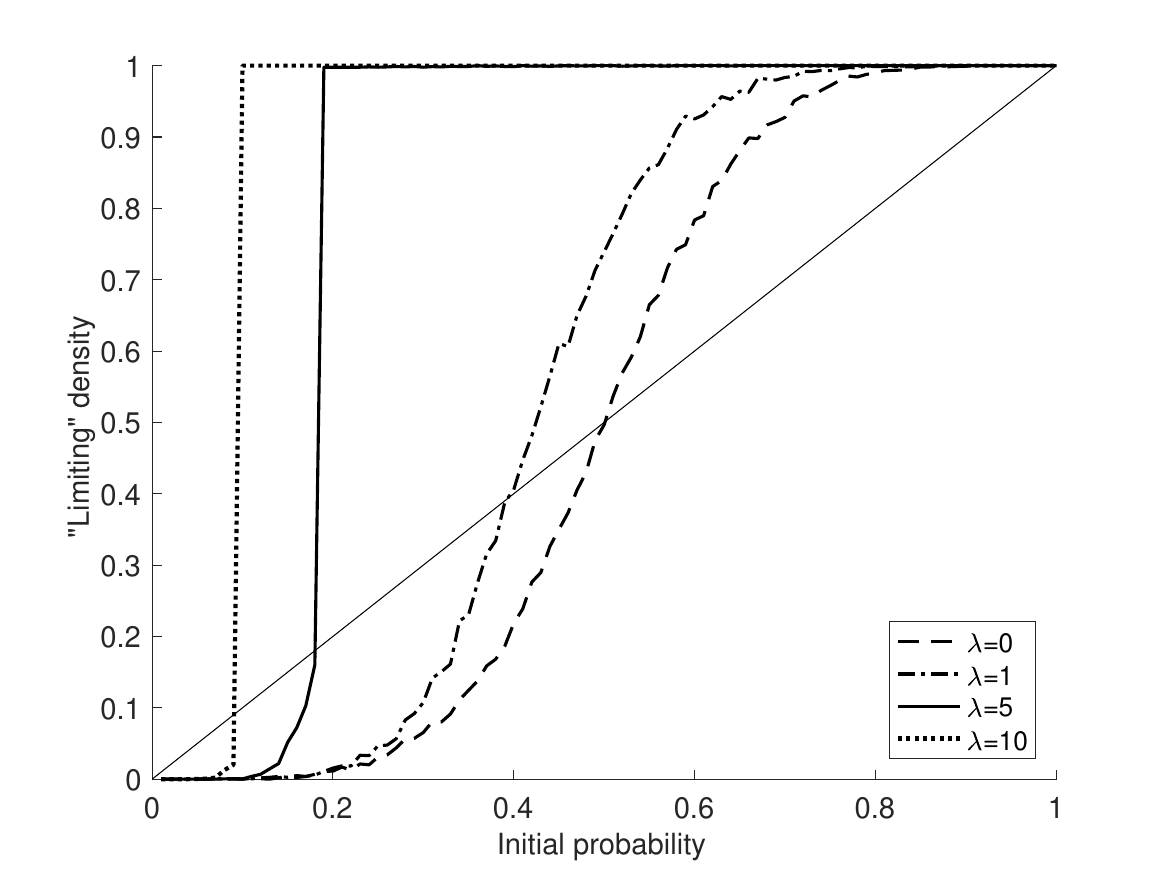}
\caption{Numerical experiments for the (conjectured) limiting density as  function of  initial probability with $N=100,$ $k=3$, and $\lambda =10,~5,~1,~0 $.
The graphs of the functions  clearly show, that for $\lambda  _1>\lambda _2$, ~  the numerical values $\hat{\rho}^{num}_{\lim}(\lambda _1,x)<\hat{\rho}^{num}_{\lim}(\lambda _2,x)$, for every $x\in [0,1]$, i.e., as $\lambda$ grows the graphs are more and more squeezed to the $y$ axis. For more details about simulations see Appendix.}
\label{conjecturelambda}
\end{figure}

%









\section{Mean-field approximation}
{
In order to get some more information on the evolution of the density, we consider the {\em mean-field} (MF) approximation of the activation process on $G_{\mathbb{Z}^2_N,p_d}$.
In the mean field approximation, instead of taking specific fixed neighbors of a given node,
we sample a new set of neighbors at each step \cite{bbk}.
This implies that the MF approximation does not depend on the topology of the torus, rather it is completely described by the degree distribution, and
the transition probabilities from one state to another depend only on the number of active nodes.
The MF approximation means that the results derived here are obtained in the case
when the activations and degrees of the various nodes are well-mixed;
hence we ignore any dependencies between activation and vertex degrees,
as well as any dependencies between the state of a vertex and the state of its neighbors.
MF approximations are widely used in various physical models \cite{Biskup06,Talagrand11},
including systems with long-range interactions and spin glasses near critical state.
In our model, we also assume that the vertices are activated independently of each other,
ignoring the small dependencies between degrees and activities for different vertices.
}
%

\subsection{Basic concepts}
The mean-field density $\rho_t$ is defined as follows. Start with $\rho_0=\hat{\rho}_0 \approx p_{in} $. Recall that $\deg(v)$ denotes the degree with respect to the long edges only, so the total degree of a vertex $v$ is $\deg(v)+4$. $\rho_t$ is given by the following stochastic recursion
\begin{equation}\label{rho+}
N^2 \rho_{t+1} = \Bin(N^2 \rho_{t},f^{+}(\rho_{t})) + \Bin(N^2 (1-\rho_{t}),f^{-}(\rho_{t})),
\end{equation}
where
\begin{align}
f^{+}(x) &=  \sum_{n=4}^{N^2-1} \mathbb{P} \left( \deg (v) = n-4 \right)
\sum_{i = k}^{n+1} \binom{n}{i-1} x^{i-1} (1-x)^{n-i+1},
\label{f+}
\\
f^{-}(x) &= \sum_{n=4}^{N^2-1} \mathbb{P} \left( \deg (v) = n-4 \right)
\sum_{i =k}^{n} \binom{n}{i} x^{i} (1-x)^{n-i}.
\label{f-}
\end{align}

\begin{lemma}\label{L2}
{Under the mean-field assumptions for the defined process on $G_{\mathbb{Z}^2_N,p_d}$, $\rho_t$ is a Markov process describing the probability that a given vertex $v$ is active at $t$, i.e., $\rho_t$ approximates the density $\hat{\rho}_t$.
Moreover, given $\rho_t$, $\rho_{t+1}$ has mean $f(\rho_t)$ and variance
$g(\rho_t)/N^2$ where}
\begin{align}
f(x) &= xf^{+}(x) + (1-x)f^{-}(x)  , \label{xf}
\\
g(x) &= xf^{+}(x)(1-f^{+}(x)) + (1-x)f^{-}(x) (1-f^{-}(x)).
\end{align}
\end{lemma}

\begin{proof}
{At the beginning, for a given initialization probability
  $p_{in}$, $\rho_0=\hat{\rho}_0 \approx p_{in}$ since vertices are initialized
  independently at random. Under MF assumptions the state of each vertex at
  time $t$ is a Bernoulli random variable with parameter $\rho_t$;
  furthermore, different vertices are regarded as independent. The rest of
  the lemma follow immediately from \eqref{rho+}--\eqref{f-}.} 
\end{proof}

\begin{remark}
  In our model, the activation of a vertex is deterministic given the number of
  active vertices in the closed neighborhood. More generally,
one can consider a model where an active (inactive) vertex with $i$ active
neighbors is activated with some probability $p_i^+$ ($p_i^-$), where
$p_i^\pm$ are some given probabilities. In this more general case,
\eqref{f+}--\eqref{f-} become
\begin{align}
f^{+}(x) &=  \sum_{n=4}^{N^2-1} \mathbb{P} \left( \deg (v) = n-4 \right)
\sum_{i = 1}^{n+1} p_i^+\binom{n}{i-1} x^{i-1} (1-x)^{n-i+1},
\\
f^{-}(x) &= \sum_{n=4}^{N^2-1} \mathbb{P} \left( \deg (v) = n-4 \right)
\sum_{i =0}^{n}p_i^- \binom{n}{i} x^{i} (1-x)^{n-i}.
\end{align}
\end{remark}

Lemma~\ref{L2} shows that the conditional variance of $\rho_{t+1}$ is
$g(\rho_t)/N^2=O(N^{-2})$, since $g\in [0,1]$ for any $\rho_{t} \in [0,1]$; thus $\rho_{t+1}$ is well concentrated
for large $N$, and we can approximate $\rho_{t+1}$ by the mean $f(\rho_t)$.

The function $f(\cdot)$ given by \eqref{xf} can be simplified to
\begin{equation}
  \begin{split}
f(x)&=xf^{+}(x) + (1-x)f^{-}(x)
\\&
=
\sum_{n=4}^{N^2-1} \mathbb{P} \left( \deg (v) = n-4 \right) \sum_{i =k}^{n+1} \binom{n}{i-1} x^{i} (1-x)^{n-i+1}
\\
&\qquad+
\sum_{n=4}^{N^2-1} \mathbb{P} \left( \deg (v) = n-4 \right) \sum_{i= k}^{n}  \binom{n}{i} x^{i} (1-x)^{n-i+1}
\\&
=
\sum_{n=4}^{N^2-1} \mathbb{P} \left( \deg (v) = n-4 \right) \left(\sum_{i= k}^{n+1}  \binom{n+1}{i} x^{i} (1-x)^{n-i+1} \right)	.
  \end{split}
\end{equation}
This can also be seen directly. Namely, if $v$ has \(n-4\) long edges, the closed neighborhood of $v$ contains $n+1$ vertices, of which $k$ have to be active for activation of $v$, and in the MF approximation, these $n+1$ vertices are active independently of each other.

In Section~\ref{subsec_degdistr} we showed that the degree distribution can be approximated by Poisson distribution \(\Po(\lambda)\).  We use this fact to approximate $f(x)$.
Consider the function
\begin{equation}
\ff(x) =
\ff_k(x)
=\sum_{n=4}^{\infty} \frac{e^{-\lambda} \lambda^{n-4}}{(n-4)!}  \sum_{i= k}^{n+1}  \binom{n+1}{i} x^{i} (1-x)^{n-i+1}.
\label{functionf}
\end{equation}
The difference between $f(x)$ and $\ff(x)$ can be bounded by
\begin{multline}\label{diffff}
| f(x) - \ff(x) |
\le \sum_{n=4}^{\infty} \left| \mathbb{P} \left( \deg (v) = n-4 \right) - \frac{e^{-\lambda} \lambda^{n-4}}{(n-4)!} \right|  \sum_{i= k}^{n+1}  \binom{n+1}{i} x^{i} (1-x)^{n-i+1}
\\
\le \sum_{n=4}^{\infty} \left| \mathbb{P} \left( \deg (v) = n-4 \right) - \frac{e^{-\lambda} \lambda^{n-4}}{(n-4)!} \right| = O\left(\frac{1}{N}\right)
\end{multline}
where the last equality follows from Lemma$~1$.

\subsection{Derivation of criticality for various \(k\) values}
We assume for simplicity that $k$ is at most $5$ in the present study. 


We rewrite $\ff=\ff_k$ defined in \eqref{functionf} as
\begin{equation}
\label{f_ordo}
\ff_k(x)
= \sum_{n=0}^{\infty} \frac{e^{-\lambda} \lambda^{n}}{n!}
\left(\sum_{i= k}^{n+5}  \binom{n+5}{i} x^{i} (1-x)^{n+5-i} \right) = \mathbb{P} [\Bin( \deg(v) + 5,x) \ge k],
\end{equation}
where random variable $\deg(v) \sim \Po(\lambda)$.



{
The critical probabilities in the mean-field approximation are given by the solutions to the fixed point equation $x=f(x)$, where the solutions of this equation are called fixed points.
This approach is based on the observation that the critical behavior of the original system often occurs near the unstable fixed points of mean-field approximation \cite{Biskup06, Talagrand11}. For a discrete time dynamical system, a fixed point is called stable if it attracts all the trajectories that start from some neighborhood of the fixed point. Otherwise, a fixed point is unstable. If $f(x)$ is continuously differentiable in an open neighborhood of a fixed point $x_0$, a sufficient condition for $x_0$ to be stable or unstable is $|f'(x_0)|<1$ or $|f'(x_0)|>1$, respectively; see, e.g., \cite{hirsch12}.}

\begin{proposition}\label{un} Let $\ff_{k}(x) : [0,1] \rightarrow [0,1]$ be
  the family of maps for $k=0, \ldots, 5$ defined by
  \eqref{f_ordo}. These maps have the following fixed points
for any $\lambda>0$:
\begin{itemize}
\item [(i)] for $k=0$ the only fixed point is $1$ and it is stable. 
\item [(ii)] for $k=1$ there are two fixed points: $1$ is stable and $0$ is unstable. 
\item [(iii)] for $k=2,3,4$ there are three fixed points: $0$ and $1$ are stable \\
and $x_k(\lambda )\in (0,1)$ is unstable;
\item [(iv)] a. for $k=5$ there are three fixed points for $\lambda > \ln(5)$: $0$ and $1$ are stable and $x_5(\lambda )\in (0,1)$ is unstable; \\
b. and there are two fixed points for $\lambda \leq \ln(5)$: $0$ is stable and $1$ is unstable.
\end{itemize}
\end{proposition}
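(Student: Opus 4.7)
Cases (i) and (ii) would be handled by direct computation. For $k=0$, $\ff_0\equiv 1$ has $x=1$ as its only fixed point with $\ff_0'(1)=0<1$, so stable. For $k=1$, one checks $\ff_1(0)=0$, $\ff_1(1)=1$, and from \eqref{func1} that $\ff_1'(0)=\lambda+5>1$ and $\ff_1'(1)=0$, so $0$ is unstable and $1$ is stable; to rule out further fixed points, the equation $x=\ff_1(x)$ simplifies, after cancellation of $(1-x)$, to $(1-x)^4=e^{\lambda x}$, whose left side lies in $[0,1)$ on $(0,1]$ while the right side is in $[1,e^\lambda]$, leaving no interior solution.

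For cases (iii) and (iv) the first move is to recognize $\ff_k$ as the CDF of the $k$-th order statistic of the combined point set $\{U_1,\dots,U_5\}\cup\Pi$, where $U_1,\dots,U_5$ are i.i.d.\ uniform on $[0,1]$ and $\Pi$ is an independent rate-$\lambda$ Poisson process on $[0,1]$; in particular $\ff_k(0)=0$ and $\ff_k(1)=1$ since there are almost surely at least $5\ge k$ points. Conditioning on the size of $\Pi$ and summing the resulting Poisson series yields the density
\begin{equation*}
\ff_k'(x) = \frac{x^{k-1}(1-x)^{5-k}}{(k-1)!}\,e^{-\lambda x}\,P_k(1-x),
\end{equation*}
with $P_2(y)=\lambda^2 y^2+10\lambda y+20$ and $P_3(y)=\lambda^3 y^3+15\lambda^2 y^2+60\lambda y+60$. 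From this $\ff_k'(0)=\ff_k'(1)=0$ for $k=2,3$, so both $0$ and $1$ are stable. Existence of an interior fixed point then follows from the intermediate value theorem applied to $h(x):=\ff_k(x)-x$: since $h(0)=h(1)=0$ and $h'(0)=h'(1)=-1$, $h$ is negative just above $0$ and positive just below $1$.

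The main obstacle is uniqueness and instability of the interior fixed point, for which the plan is to prove that $\ff_k$ is S-shaped on $[0,1]$ (strictly convex on $(0,\xinfl)$ and strictly concave on $(\xinfl,1)$ for a single $\xinfl$), equivalently that the density $\ff_k'$ is strictly log-concave on $(0,1)$. Taking logs in the density formula, the terms $(k-1)\log x$ and $(5-k)\log(1-x)$ contribute strictly negative curvature, $-\lambda x$ contributes zero, and it remains to verify $(\log P_k(1-x))''<0$; a direct computation shows that $P_k''P_k-(P_k')^2$ is a polynomial in $y$ and $\lambda$ with manifestly negative coefficients, equal to $-2\lambda^2(\lambda^2 y^2+10\lambda y+30)$ for $k=2$ and to an analogous negative polynomial for $k=3$. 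With $\ff_k'$ strictly log-concave and vanishing at both endpoints, it has a unique interior maximum, so $\ff_k''$ has a single sign change in $(0,1)$; two applications of Rolle's theorem then bound the total multiplicity of zeros of $h$ in $[0,1]$ by three, leaving at most one interior zero in addition to the simple zeros at $0$ and $1$. This forces the interior fixed point to be simple with $h'(x_k)>0$, i.e.\ $\ff_k'(x_k)>1$, certifying instability.
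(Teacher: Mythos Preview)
Your proof is correct and takes a genuinely different, more unified route than the paper. For $k=2$ and $k=3$ the paper divides the fixed-point equation by $(1-x)$ and compares $e^{\lambda x}$ with the resulting polynomial separately in each case: for $k=2$ by locating the inflection point and the maximum of the polynomial, and for $k=3$ by a lengthy root count via the Budan--Fourier theorem together with numerical evaluations at $x=0.2$. Your argument instead handles both cases at once: the closed-form density $\ff_k'(x)=\tfrac{x^{k-1}(1-x)^{5-k}}{(k-1)!}\,e^{-\lambda x}P_k(1-x)$ with $P_k$ having positive coefficients makes strict log-concavity of $\ff_k'$ a two-line computation ($P_k''P_k-(P_k')^2$ has all negative coefficients), and the resulting S-shape of $\ff_k$ forces a single interior fixed point. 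This avoids Budan--Fourier and all numerical checks, and it yields instability $\ff_k'(x_k(\lambda))>1$ directly, whereas the paper obtains instability only indirectly from monotonicity of $\ff_k$ combined with stability at $0$ and $1$.

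One small point of care: your Rolle count should be run on the open interval, since $\ff_k''$ can vanish at the endpoints (at $x=1$ for $k=2$, at both endpoints for $k=3$). This does not affect the argument, because $0$ and $1$ are \emph{simple} zeros of $h=\ff_k-x$ (you note $h'(0)=h'(1)=-1$); hence if $h$ had four or more zeros in $[0,1]$ counted with multiplicity, two successive applications of Rolle would produce at least two zeros of $h''$ strictly inside $(0,1)$, contradicting the single sign change of $\ff_k''$ there.
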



\begin{proof}
For $k=0$, the equation $\ff_0(x) = x$ reduces to just
\begin{equation}
x = 1.
\label{mfeqn0}
\end{equation}
In this case the fixed point $x=1$ is stable since $\ff'_0(x) =0$.

For $k=1$, $\bar{f}_1(x) = x$ can be written
\begin{equation}
(1-x)e^{\lambda x} = (1-x)^{5}.
\label{mfeqn1}
\end{equation}
This equation has only two solutions $0$ and $1$ in $[0,1]$, where $0$ is an unstable fixed point since $\bar{f}'_1(0) =5+ \lambda > 1$, while $1$ is a stable fixed point because $\bar{f}'_1(1)=0$.

Now we consider cases (iii) and (iv)-$a$ together. It is easy to see that in these cases $\ff_{k}(0) = 0$ and $\ff_{k}(1) = 1$. 
Also easy calculations show that $\ff'_{k}(x)$ is given on $(0,1]$ by
\begin{eqnarray}
\label{FuncPrime}
\hspace{.5in} 
\ff'_{k}(x) = \frac{k}{x} \mathbb{P} [ \Bin( \deg(v) + 5,x) = k ] = \frac{k}{x} \mathbb{P} [ \Po(\lambda x) + \Bin(5,x) = k].
\end{eqnarray}
This function can be rewritten (for any $k \ge 2$) as
\begin{eqnarray}
\label{FuncPrimeSimpl}
\hspace{.5in} 
\ff'_{k}(x) = k e^{-\lambda x} x^{k-1} \sum_{i=0}^{\min\{k,5\}} \binom{5}{i}  \frac{\lambda^{k-i} (1-x)^{5-i}}{(k-i)!}.
\end{eqnarray}
In order to see that there exists a solution of $\ff_k(x) = x$ on $(0,1)$, note that in case (iii) $\ff'_{k}(0) = 0$ and $\ff'_{k}(1) = 0$. In case $k=5$, $\ff'_{5}(1) = 5 e^{- \lambda}$, which is less than 1 if $\lambda > \ln(5)$, while $\ff'_{5}(0) = 0$  for any $\lambda$. Since function $\ff_{k}(x)$ is continuous there will be at least one solution to $\ff_k(x) = x$ on $(0,1)$.

This solution is unique. Assume for the contrary that there exist at least
two solutions on $(0,1)$.
Since $0$ and $1$ are solutions, Rolle's theorem implies that
the derivative $\ff'_k(x)-1$
of $\ff_{k}(x) - x$ would have at least three zeros on $(0,1)$. 
We are going to show that the function $\ff'_{k}(x)$ is unimodal on $[0,1]$,
and not constant on any interval, which would yield a contradiction.
To establish the required property of $\ff'_{k}(x)$, we denote the quintic polynomial in \eqref{FuncPrimeSimpl} by $P_k(x)$. 
Clearly, $e^{-\lambda x}$,  $ x^{k-1} $ are log-concave on $(0,1)$, and $P_k(x)$ is strictly log-concave on $(0,1)$, see Appendix. 
Hence $\ff'_{k}(x)$ is strictly log-concave, and therefore it is unimodal,
and not constant in any interval.
 
 In the existence argument above we showed that $\bar{f}'_k(0)=0$ and $\bar{f}'_k(1) < 1$. Therefore, the fixed points $x=0$ and $x=1$ are stable in cases (iii) and (iv)-$a$. This also implies that the unique solution on $(0,1)$ is unstable.

Finally, in case (iv)-$b$, $\ff_{5}(0) = 0$ and $\ff_{5}(1) = 1$ for all
$\lambda \ge 0$. Under the condition on $\lambda$, we still have that
$\ff'_{k}(0) = 0$. However,  $\ff'_{k}(1)  \ge 1$ for $\lambda \le
\ln(5)$. Hence, if we had at least one solution of $\ff_{5}(x)=x$ on
$(0,1)$, then $\ff'_{5}(x)-1 = 0$ would have three solutions in $(0,1]$,
contradicting to the fact that $\ff_{5}(x)$ is unimodal 
and not constant on any interval.
The fixed
point $x=0$ is stable since $\ff'_{k}(0) = 0$, and $x=1$ is unstable because
$\ff'_{k}(1) >1$ for $\lambda < \ln(5)$. When $\lambda=\ln(5)$ we have
$\ff'_{5}(1) = 1$ which does not imply immediately the stability type of the
fixed point. However, since there is no solutions to $\ff_{5}(x) = x$ on
$(0,1)$ and $x=0$ is a stable fixed point, for $\lambda=\ln(5)$ the fixed
point $x=1$ is unstable. 
\end{proof}

For all cases considered above $0$ is a fixed point of $\ff$.
As we noted before, the error $f(x)-\ff(x)$ is $0$ at $0$, so this fixed
point is also a fixed point of $f(x)$ for any $N$.
If $x$ is an unstable fixed point of $\ff$ with $\ff'(x) > 1$, then
\eqref{diffff} implies that $f(x)$ has a fixed point shifted from $x$  at
most by $O(1/N)$.
These arguments are valid in case $\lambda$ is a fixed constant independent of $N$.

Let $p$ denote the probability that a node is initially activated and $p_{c}$
be the nontrivial solution(s) derived above. Since $\rho_t$ is a Markov
process, for the mean-field approximation we obtain the following theorem.

\begin{theorem}\label{ThmMF1}
In the mean-field approximation of the activation process \(A(t)\) over
random graph $G_{\mathbb{Z}^2_N,p_d}$
there exists a critical probability \(p_c\) such that
for a fixed $p$, with high probability for large $N$,
all vertices will eventually be active if $p> p_c$, while all vertices will
eventually be inactive for $p < p_c$.
The value of \(p_c\) is given as the function of \(k\) and $\lambda$
as follows:
\begin{itemize}
\item[(i)]
For $k=0$ and any $\lambda$, $p_c=0$ and all vertices will become active in
one step for any $p$.
\item[(ii)]
For $k=1$ and any $\lambda$, $p_c = 0$, i.e., for any fixed $p > 0$,  all vertices will eventually become active with high probability.
\item[(iii)]
For $k=2,3,4$ and any $\lambda$, $ p_c = x_{k}(\lambda)$, where
$x_{k}(\lambda)\in(0,1)$ is a nontrivial solution to $x=\bar{f}_k(x)$.
\item[(iv)]
For $k=5$ and $\lambda >\ln(5)$, $ p_c = x_{5}(\lambda)$, where
$x_{5}(\lambda)\in(0,1)$ is a nontrivial solution to $x=\bar{f}_5(x)$; for $\lambda \le \ln(5)$, $ p_c =1$.
\end{itemize}
\end{theorem}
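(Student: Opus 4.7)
My plan is to combine the deterministic fixed-point picture from Proposition~\ref{un} with the sharp concentration carried by the Markov recursion \eqref{rho+}, showing that the stochastic trajectory $(\rho_t)$ is whp driven into the basin of attraction of a stable fixed point of $\ff_k$.

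First, $N^2\rho_0\sim\Bin(N^2,p)$ and Chernoff's inequality give $|\rho_0-p|=O(N^{-1/3})$ whp. Second, Lemma~\ref{L2} yields $\mathbb{E}[\rho_{t+1}\mid\rho_t]=f(\rho_t)$ with conditional variance $g(\rho_t)/N^2\le 1/(4N^2)$, so Chebyshev's inequality gives $|\rho_{t+1}-f(\rho_t)|=O(N^{-1/3})$ whp; combined with the uniform estimate $|f(x)-\ff_k(x)|=O(1/N)$ from \eqref{diffff}, a single step of the stochastic dynamics is $O(N^{-1/3})$-close to the deterministic map $\ff_k$.

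Fix $p\ne p_c$. Since $\ff_k$ is Lipschitz on $[0,1]$ (say with constant $L$), and since by Proposition~\ref{un} its only fixed points are $\{0,1\}$ (for $k=0,1$) or $\{0,x_k(\lambda),1\}$ (for $k=2,3$), with $\ff_k'=0$ at the stable fixed points $0$ and $1$, iteration of $\ff_k$ from $p$ converges super-geometrically to the stable fixed point on the correct side of $x_k(\lambda)$ (or directly to $1$ when $k=0,1$). Pick a constant $T_0=T_0(p,\lambda)$ so that $\ff_k^{(T_0)}(p)$ is within distance $\tfrac{1}{2}|p-x_k(\lambda)|$ of the target attractor; a union bound over the first $T_0$ steps shows that whp
\[
|\rho_{T_0}-\ff_k^{(T_0)}(\rho_0)|\le T_0\cdot L^{T_0}\cdot O(N^{-1/3})=o(1),
\]
so $\rho_{T_0}$ lies whp in a small neighborhood of the targeted stable fixed point.

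It remains to close the gap from \emph{close to $0$ (resp.\ $1$)} to \emph{absorbed at $0$ (resp.\ $1$)}. For the attractor at $1$, once $1-\rho_t\le\epsilon$ the quantities $1-f^+(x)$ and $1-f^-(x)$ are $O((1-x)^{2})$ or smaller because $k\le 3$ and the closed neighborhood has size at least $5$, so the expected number of inactive vertices shrinks by a factor of $O(\epsilon)$ per step, driving $N^2(1-\rho_t)$ below $1$ within $O(\log\log N)$ further steps; absorption at $\rho_t=1$ then follows since $f^+(1)=1$. A symmetric argument handles the attractor at $0$, where $f^-(x)=O(x^k)$ with $k\ge 2$ makes $N^2\rho_t$ a supercritically contracting process. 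The cases $k=0,1$ collapse trivially: $\ff_0\equiv 1$ gives $\rho_1=1$ in one step, and $\ff_1(x)>x$ on $(0,1)$ reduces $p_c$ to $0$. The main obstacle will be the iterated Chebyshev bookkeeping: one must verify that the accumulated $O(N^{-1/3})$ noise never pushes $\rho_t$ across the unstable separator $x_k(\lambda)$, which is ensured here because $p$ is a fixed constant bounded away from $p_c$ while only polylogarithmic iterations are needed.
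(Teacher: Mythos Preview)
Your proposal is correct and follows essentially the same approach as the paper: use Proposition~\ref{un} for the fixed-point picture of $\ff_k$, track $\rho_t$ against the deterministic iterates $\ff_k^{(t)}(p)$ via the concentration in Lemma~\ref{L2} together with the approximation \eqref{diffff}, and then exploit $\ff_k'(0)=\ff_k'(1)=0$ to close out absorption. The only difference is cosmetic bookkeeping in how the two phases are split: the paper tracks for $t_N=O(\log N)$ steps down to $\rho_t=O(N^{-0.8})$ and then reaches $\rho=0$ in two further quadratic steps, whereas you track for a constant $T_0$ and then run the contraction phase for $O(\log\log N)$ steps; one small slip is that your phrase ``shrinks by a factor of $O(\epsilon)$ per step'' would give only geometric decay, while the $O(\log\log N)$ count you correctly state reflects quadratic contraction (the shrinking factor is $O(1-\rho_t)$, itself decreasing, not a fixed $O(\epsilon)$).
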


\begin{proof}
  Consider the case $0\le p<p_c$ (and thus (iii) or (iv)); the case $p_c<p\le 1$ is similar and (i) and (ii) are trivial.
In the limit as $N\to\infty$, $\rho_0=p$ and $\rho_t$ is
deterministic with $\rho_{t+1}=\ff(\rho_t)$. Since $p<p_c$, the sequence
$\rho_t=\ff^t(p)$ converges, as $t\to\infty$, to the fixpoint $0$. Furthermore,
because $\ff'(0)=0$, the convergence is (at least) quadratic, and in
particular geometric.

Now consider a fixed positive integer $N$. The deterministic sequence $\ff^t(p)$ just considered reaches below $1/N$ for $t\ge t_N$, where $t_N=O(\log N)$. The sequence $\rho_t$ is a random perturbation of $\ff^t(p)$. In each step, we have two sources of error: the difference in mean
$f(\rho_t)-\ff(\rho_t)=O(1/N)$, by \eqref{diffff}, and the random error
coming from the binomial distributions in \eqref{rho+}, which by a standard
Chernoff bound is $O(N^{-0.9})$ with probability $1-O(N^{-1})$, say.
Since further $|f'(x)|\le1$ for small $x$, the combined error from the first
$t_N$ steps is $t_N(O(N^{-1})+O(N^{-0.9}))=O(N^{-0.8})$ with probability
$1-O(t_N N^{-1})=1-o(1)$.
Hence, with high probability, we reach a state with $\rho_t=O(N^{-0.8})$.
Then $f(\rho_t)=O(\rho_t^2)=O(N^{-1.6})$, and by another Chernoff bound (or
Chebyshev's inequality), $\rho_{t+1}=O(N^{-1.6})$ with high probability.
But then $f(\rho_{t+1})=O(\rho_{t+1}^2)=O(N^{-3.2})$, and thus (conditionally
given $\rho_{t+1}$), the expected number of active vertices at time $t+2$
is $N^2f(\rho_{t+1})=O(N^{-1.2})=o(1)$, and thus with high probability there
are no active vertices at all at time $t+2$.
\end{proof}

\begin{corollary}\label{bound}
Case $(iii)$ of Theorem \ref{ThmMF1} can be sharpened as follows.
\begin{itemize}
\item[]
For $k=2$ and any $\lambda$, $ p_c = x_{2}(\lambda)$, where
$x_{2}(\lambda)\in (0,x_2(0)]$ is a unique solution to $x=\bar{f}_2(x)$ and $x_2(0) \approx 0.131$.
\item[]
For $k=3$ and any $\lambda$, $ p_c = x_{3}(\lambda)$, where
$x_{3}(\lambda)\in (0,x_3(0)]$ is a unique solution to $x=\bar{f}_3(x)$ and $ x_3(0)= 0.5$.
\item[]
For $k=4$ and any $\lambda$, $ p_c = x_{4}(\lambda)$, where
$x_{4}(\lambda)\in (0,x_4(0)]$ is a unique solution to $x=\bar{f}_4(x)$ and $ x_4(0)=1-x_2(0) \approx 0.869$.
\end{itemize}
\end{corollary}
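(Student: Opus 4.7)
The plan is to combine Proposition \ref{un} and Lemma \ref{monotone} with a direct computation at $\lambda = 0$. Proposition \ref{un} parts (iii) and (iv) already give $x_k(\lambda) \in (0,1)$ for $k = 2,3$, so only the upper bound $x_k(\lambda) \le x_k(0)$ and the explicit identification of $x_k(0)$ remain. The upper bound is immediate from Lemma \ref{monotone}: since $x_k(\lambda)$ is non-increasing in $\lambda \ge 0$, we have $x_k(\lambda) \le x_k(0)$ for every $\lambda \ge 0$. Hence the work reduces to evaluating the fixed point of $\ff_k$ when $\lambda = 0$.

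For $k=2$, setting $\lambda = 0$ in \eqref{func2} kills both the exponential factor $e^{-\lambda x}$ and the last summand inside the parentheses, leaving $\ff_2(x) = 1 - (1-x)^5 - 5x(1-x)^4$. The fixed-point equation $\ff_2(x) = x$ then factors as
\begin{equation*}
(1-x)\bigl[1 - (1-x)^3(1+4x)\bigr] = 0,
\end{equation*}
so aside from $x = 1$ the nontrivial roots are those of $(1-x)^3(1+4x) = 1$. Expanding and dividing by $x$ yields the cubic $4x^3 - 11x^2 + 9x - 1 = 0$. Evaluating at $0$ and $1$ shows a sign change, and a short numerical bisection pins the unique root of this cubic in $(0,1)$ near $0.132$; by Proposition \ref{un}(iii) this root must equal $x_2(0)$.

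For $k=3$, setting $\lambda = 0$ in \eqref{func3} similarly collapses the expression to $\ff_3(x) = 1 - (1-x)^5 - 5x(1-x)^4 - 10x^2(1-x)^3 = 1 - (1-x)^3\bigl(1 + 3x + 6x^2\bigr)$. Here I would simply substitute $x = 1/2$: one gets $1 - (1/2)^3(1 + 3/2 + 3/2) = 1 - (1/8)\cdot 4 = 1/2$, so $x = 1/2$ is exactly a fixed point. Proposition \ref{un}(iv) guarantees uniqueness of the nontrivial fixed point in $(0,1)$, giving $x_3(0) = 1/2$ as claimed.

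None of the steps presents a real obstacle; the only piece that required guesswork rather than mechanical calculation was noticing the clean value $x_3(0) = 1/2$. For $k=2$ one must accept a numerical approximation since the relevant cubic has no rational root, but this only affects how $x_2(0)$ is reported and not the structural content of the corollary.
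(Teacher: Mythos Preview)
Your argument is correct and follows essentially the same route as the paper: invoke Lemma~\ref{monotone} for the bound $x_k(\lambda)\le x_k(0)$, and compute $x_k(0)$ directly from \eqref{mfeqn2}--\eqref{mfeqn3} at $\lambda=0$. The only cosmetic difference is that the paper records the explicit Cardano expression $x_2(0)=\tfrac{11}{12}-\tfrac{1}{12}(235+6\sqrt{1473})^{1/3}-\tfrac{13}{12}(235+6\sqrt{1473})^{-1/3}\approx 0.131123$ for the root of your cubic $4x^3-11x^2+9x-1=0$, whereas you leave it numerical.
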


\begin{proof}
The values $x_2(0) = \frac{11}{12} - \frac{1}{12}(235+6
\sqrt{1473})^{1/3}-\frac{13}{12}(235+6\sqrt{1473})^{-1/3} \approx 0.131123$,
 $x_3(0)=\frac12$, and 
$ x_4(0) = \frac{1}{12} + \frac{1}{12}(235+6
\sqrt{1473})^{1/3}+\frac{13}{12}(235+6\sqrt{1473})^{-1/3} \approx 0.868877$,
 can be obtained from $x=\bar{f}_k(x)$ with $\lambda = 0$.
Clearly,
$ p_c = x_{k}(\lambda)$ is a non-increasing function of
$\lambda\ge0$. 
Indeed, we can couple
two models with parameters $\lambda_1$ and $\lambda_2$,
with $\lambda_1<\lambda_2$,
such that the density of active vertices for $\lambda_1$ is less than or equal to the density of active vertices for $\lambda_2$. Therefore, for $\lambda \ge 0$, 
$x_k(\lambda) \le x_k(0)$.
\end{proof}

From the equation (\ref{f_ordo}),    $p_c=x_k(\lambda)\to0$ as $\lambda\to\infty$.
As $\lambda \rightarrow 0$,  $p_c=x_k(\lambda)$ tends to 1 for \(k=5\), 0.868877 for \(k=4\), 0.5 for \(k=3\), and 0.131123 for \(k=2\).

Comparing Figures \ref{pcLambda} and  \ref{pcLambdaReal}  one can see, that in cases $k=3,4,5$ the critical values obtained in MF seem to approximate well the (numerical) "limiting" densities formulated in Conjectures \ref{c1}, \ref{c2} and \ref{c3} (if they exist), i.e., the threshold where evolution of the density is changing from decreasing to increasing {\em in the real model.}



\begin{figure}[!t]
\begin{center}
\begin{minipage}[h]{\linewidth}
\center{\includegraphics[width=0.7\linewidth]{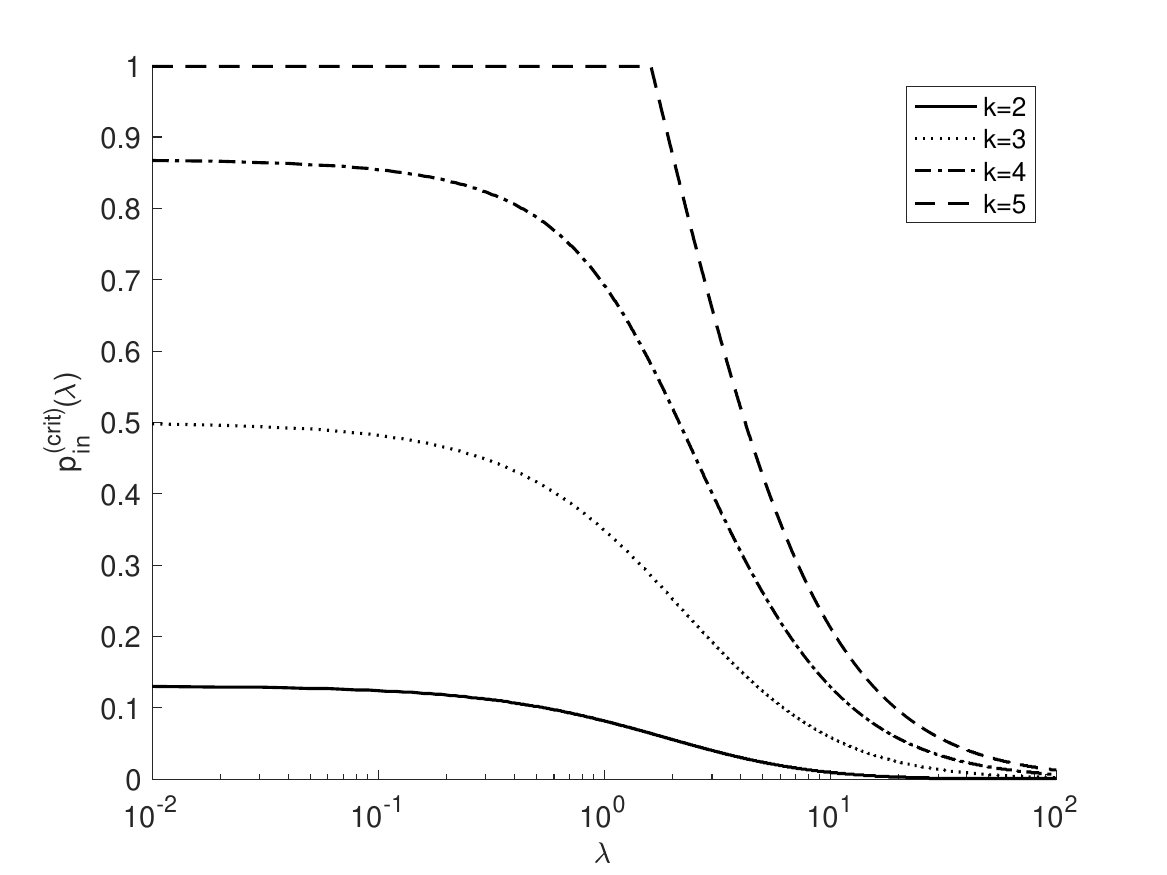}}
\caption{Mean-field approximation: $p_c$ as a function of $\lambda$ for $k=2,\ldots, 5$, that is, the numerical solution of $\ff_{k}(x) = x$.} 
\label{pcLambda}
\end{minipage}
\vfill 
\begin{minipage}[h]{\linewidth}
\center{\includegraphics[width=0.7\linewidth]{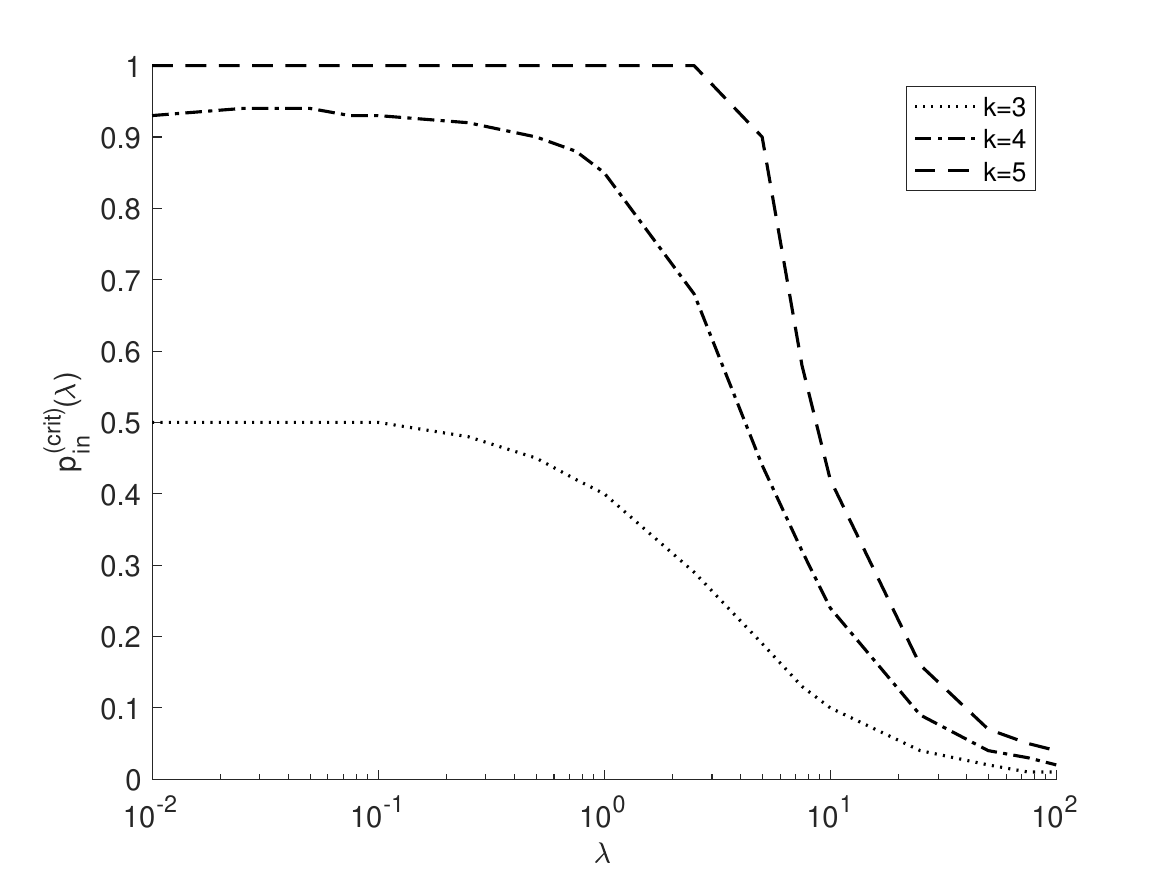}}
\caption{Real process: numerical values for $p^{(crit)}_{in}(\lambda )$ as functions of $\lambda$ for $k=3,\ldots, 5$ with $N=100$. Notice that here  $p^{(crit)}_{in}(\lambda )$ is rather the threshold where the evolution of the density changes from decreasing to increasing. For more details about simulations see Appendix.}
\label{pcLambdaReal}
\end{minipage}
\end{center}
\end{figure}

\section{Discussion and Conclusions}

In this work we introduced the random graph model $G_{\mathbb{Z}^2_N,p_d}$. We derived bounds on the diameter of this graph and described its degree distribution.
We studied the activation processes on $G_{\mathbb{Z}^2_N,p_d}$ and approximated the evolution of the density in the real model with critical values in mean-field. Specifically, we derived conditions for phase
transitions as a function of initialization probability $p$ and long edge parameter $\lambda$. The dependence of $p_c$ on $\lambda$  in the mean field model and numerical values for $p^{(crit)}_{in}(\lambda )$ in the real model (if exist) are shown on Figures~\ref{pcLambda},  \ref{pcLambdaReal}, respectively. It seems that $p_c$ approximates $p^{(crit)}_{in}(\lambda)$ well.

{The model introduced in this paper is motivated by the structure and operation of the neuropil, the densely connected neural tissue of the cortex \cite{freeman91, kozma05}. The human brain has about \(10^{11}\) neurons. Typically, a neuron has several thousands of connections to other neurons through synapses, thus the human brain has \(\sim 10^{15}\) synaptic
connections. Most of the connections are short and limited to the neuron's
direct neighborhood (in some metric), forming the so-called the {\em dendritic arbor}. In addition, the neurons have a few long connections (axons), which extend further away from their cell body. In general, there are several thousands short connections in the dendritic arbor for a few distant connections represented by long axons. We use $G_{\mathbb{Z}^2_N,p_d}$ to model the combined effect of mostly short connections  and a few long connections.
It is much more likely to have in brains shorter connections than longer ones,
which is a fact captured in the definition of $p_d$, as $p_d$ is decreasing in the graph distance $d$.}

{
There are two types of neurons in the brain, namely excitatory and inhibitory ones. The type of a neuron describes the function of the neuron in the brain. Excitatory (inhibitory) neurons excite (inhibit) the neurons to which they are connected.
It is known that there are much more excitatory neurons than inhibitory neurons in the cortex; the ratio of inhibitory to excitatory neurons is typically \(1/4\) \cite{freeman91}.
Based on neuroscience studies, it is expected that pure excitatory populations can maintain non-zero background activation level, while interacting excitatory and inhibitory populations are able to produce limit cycle oscillations \cite{kozma15}.

{
This paper focuses on conditions required to sustain non-zero activity level in pure excitatory networks, but the model can be generalized to include two types of vertices \cite{krs}. Here we briefly outline the proposed approach.
The type of a vertex is either excitatory (\(E\)) or inhibitory (\(I\)). Let $A_{E}(t)$ and $A_{I}(t)$ be the sets of active vertices of type \(E\) and \(I\) at time \(t\), respectively. The total number of active vertices is given by $A(t) = A_{E}(t) \cup A_{I}(t)$. Using the potential function defined in Section~\ref{section3}, we can rewrite $A_i(t) = \{v \in
V(G_{\mathbb{Z}^2_N,p_d})  \bigm| \chi_v (t)=1 \  \&~v \text{ is of }\text{ type}~i \}$, where $i \in \{E, I\}$. Each vertex may change its activity based on the states of its neighbors. We define the modified $k$-threshold rule for two types of vertices as follows. For a vertex $v$ of type \(E\), the evolution rule is
}
{
\begin{equation}
\chi_v (t+1) = \mathbbm{1}  \left(
\sum_{u \in N^{E}(v)}{\chi_u(t)} - \sum_{u \in N^{I}(v)}{\chi_u(t)} \geq k
\right),
\label{dynamicsE}
\end{equation}
where $N^{E}(v)$  and $N^{I}(v)$ denote the subsets of vertices in the closed neighborhood of the vertex $v$, of type \(E\) and \(I\), respectively.
For a vertex $v$ of type \(I\), the following rule holds:
\begin{equation}
\chi_v (t+1) = \mathbbm{1}  \left(
\sum_{u \in N^{E}(v)}{\chi_u(t)} + \sum_{u \in N^{I}(v)}{\chi_u(t)} \geq k
\right) = \mathbbm{1}  \left( \sum_{u \in N(v)}{\chi_u(t)} \geq k \right),
\label{dynamicsI}
\end{equation}
where $N(v) = N^{E} (v) \cup N^{I} (v)$ is the closed neighborhood of vertex
$v$. Notice, that vertices of type \(E\) and \(I\) influence each other differently.
}

 Open problems concerning the properties of $G_{\mathbb{Z}^2_N,p_d}$ and the activation process on the graph include: What is the number of small cycles? What is the clustering coefficient? Does a unique limit density
 defined in Conjectures \ref{c1}, \ref{c2} and \ref{c3} exist? 
Additional open questions include the generalization of these results for other lattice types and higher dimensions. 

\begin{acknowledgement*} We are grateful to the unknown referee whose important suggestions led to additional results and improved the presentation of this paper. Figures \ref{conjecturelambda} and 
\ref{pcLambdaReal} are based on simulations that were done by Gabriel P. Andrade.
\end{acknowledgement*}

\section{Appendix. }
\subsection{The second derivative of $\log(P_k(x))$}
\begin{equation}
\left( \log(P_2(x)) \right)''  =  \frac{-5( \lambda^4(1-x)^4 +16 \lambda^3 (1-x)^3 + 96 \lambda^2 (1-x)^2 + 240 \lambda (1-x) + 240)}
{(1-x)^2 ( \lambda^2(1-x)^2 +10\lambda (1-x) + 20)^2},
\end{equation}
\begin{multline}
\left( \log(P_3(x)) \right)''  = 
-5 \left[ \lambda^6 (1-x)^6 + 24 \lambda^5 (1-x)^5 \right.
\\
\left. +228 \lambda^4 (1-x)^4+ 1056 \lambda^3 (1-x)^3 +2520 \lambda^2 (1-x)^2 +2880 \lambda (1-x) + 1440) \right] 
\\
\left( (1-x) ( \lambda^3(1-x)^3 +15 \lambda^2 (1-x)^2 +60 \lambda (1-x) + 60) \right)^{-2},
\end{multline}
\begin{multline}
\left( \log(P_4(x)) \right)''  =  -5 
\left[ \lambda^8 (1-x)^8 + 32 \lambda^7 (1-x)^7 + 416 \lambda^6 (1-x)^6 + 2784 \lambda^5 (1-x)^5 \right.
\\
\left.  + 10320 \lambda^4 (1-x)^4 + 21120 \lambda^3 (1-x)^3 + 23040 \lambda^2 (1-x)^2 + 11520 \lambda(1-x) +2880  \right]
\\
\left( (1-x) ( \lambda^4 (1-x)^4 +20 \lambda^3 (1-x)^3 +120 \lambda^2 (1-x)^2 +240 \lambda (1-x) + 120) \right)^{-2},
\end{multline}
\begin{multline}
\left( \log(P_5(x)) \right)''  = 
-5 \lambda^2 \left[ \lambda^8(1-x)^8 + 40\lambda^7 (1-x)^7+660\lambda^6 (1-x)^6 + 5760 \lambda^5(1-x)^5  \right. 
\\
\left. + 28800\lambda^4(1-x)^4 + 83520\lambda^3 (1-x)^3 +136800 \lambda^2 (1-x)^2 + 115200 \lambda(1-x) + 43200 \right]
\\
(\lambda^5 (1-x)^5 + 25 \lambda^4(1-x)^4 +200 \lambda^3 (1-x)^3 + 600\lambda^2(1-x)^2 +600 \lambda (1-x) + 120)^{-2}.
\end{multline}

Clearly, $\left( \log(P_i(x)) \right)''<0$, for $i=2,3,4,5$, $\lambda > 0$, $x\in (0,1)$.

\subsection{Description of simulations}
Figure~1 and 3 in the main text are based on the simulations of the real process on $G_{\mathbb{Z}^2_N,p_d}$ with $N=100$, i.e., $|V(G_{\mathbb{Z}^2_N,p_d})| = 10000$, that were done as follows. For every value of $\lambda$, 15 graphs were generated, each with different random seed. The process was ran on each graph for all initialization probabilities between 0 and 1 with step 0.01. "Limiting" densities shown on Figure~1 were obtained under the condition that either the density converges after the first 1000 iterations to 0 or 1, or undergoes repetitions after the first 1000 iterations.

\end{document}